\theoremstyle{plain}
\newtheorem{theorem}[equation]{Theorem}
\newtheorem{lemma}[equation]{Lemma}
\newtheorem{corollary}[equation]{Corollary}
\theoremstyle{definition}
\newtheorem{assumption}[equation]{Assumption}
\theoremstyle{remark}
\newtheorem{remark}[equation]{Remark}
\numberwithin{equation}{section}
\newcommand{\bR}{\mathbb{R}}
\def\bH{\mathbb{H}}
\newcommand\cH{\mathcal{H}}
\newcommand\cX{\mathcal{X}}
\newcommand\Div{\operatorname{div}}
\newcommand{\wei}[1]{\langle #1 \rangle}
\providecommand{\set}[1]{\{#1\}}
\providecommand{\norm}[1]{\lVert#1\rVert}
\begin{document}
\title[Stokes systems with VMO coefficients]{Mixed-norm $L_p$-estimates for non-stationary Stokes systems with singular VMO coefficients and applications}

\author[H. Dong and T. Phan]{Hongjie Dong and Tuoc Phan}
\address[H. Dong]{Division of Applied Mathematics, Brown University,
182 George Street, Providence, RI 02912, USA}
\email{Hongjie\_Dong@brown.edu}
\address[T. Phan]{Department of Mathematics, University of Tennessee,  227 Ayres Hall, 1403 Circle Drive, Knoxville TN 37996, USA}
\email{phan@math.utk.edu}
\thanks{H. Dong was partially supported by the NSF under agreement  DMS-1600593; T. Phan is partially supported by the Simons Foundation, grant \# 354889.}
\subjclass[2010]{Primary: 76D03, 76D05, 76D07; Secondary: 35K67, 35K40}
\keywords{time-dependent Stokes system, mixed-norm regularity estimates, Navier-Stokes equations, Leray-Hopf weak solutions, regularity criteria}
\begin{abstract}
We prove the mixed-norm Sobolev estimates for solutions to both divergence and non-divergence form time-dependent Stokes systems with unbounded measurable coefficients having small mean oscillations with respect to the spatial variable in small cylinders. As a special case, our results imply Caccioppoli's type estimates for the Stokes systems with variable coefficients.  A new $\epsilon$-regularity criterion for Leray-Hopf weak solutions of Navier-Stokes equations is also obtained as a consequence of our regularity results, which in turn implies some borderline cases of the well-known Serrin's regularity criterion.
\end{abstract}
\maketitle

\today

\section{Introduction and main results}

In this paper, we study mixed-norm estimates in Sobolev spaces for solutions of non-stationary Stokes systems with measurable singular coefficients in both divergence and non-divergence forms.  Due to the singularity of the coefficients, our established results will be used to prove a new $\epsilon$-regularity criterion for Leray-Hopf weak solutions of the Navier-Stokes equations. Precisely, we study the following time-dependent Stokes system with general coefficients:
\begin{equation}
                        \label{eq7.41b}
u_t-D_{i}(a_{ij}D_j u)+\nabla p=\Div f,\quad \Div u= g,
\end{equation}
where $u = u(t,x) \in \bR^d$ is an unknown vector solutions representing the velocity of the considered fluid, $p = p(t,x)$ is an unknown fluid pressure. Moreover,  $f(t,x)=(f_{ij}(t,x))$ is a given measurable matrix of external forces, $g = g(t,x)$ is a given measurable function, and $a_{ij}=b_{ij}(t,x) + d_{ij}(t,x)$ is a given measurable matrix of viscosity coefficients that satisfies the following boundedness and ellipticity conditions with ellipticity constant $\nu \in (0,1)$:
\begin{equation} \label{ellipticity}
\nu |\xi|^2 \leq a_{ij} \xi_i \xi_j, \quad |b_{ij}| \leq \nu^{-1},
\end{equation}
and
\begin{equation} \label{symmetry}
 b_{ij} = b_{ji}, \quad d_{ij} \in L_{1,\text{loc}}, \quad d_{ij} = - d_{ji}, \quad \forall \ i, j \in \{1, 2,\ldots, d\}.
\end{equation}
Our goal is to establish regularity estimates for the gradient $Du$ of weak solutions $u$ of \eqref{eq7.41b} in the Lebesgue mixed-norm $L_{s,q}$.

We also consider non-divergence form Stokes systems
\begin{equation}
                        \label{eq7.41c}
u_t-a_{ij}D_{ij} u+\nabla p=f,\quad \Div u= g,
\end{equation}
and in this non-divergence form setting, the matrix $a_{ij} = b_{ij}$, i.e., $d_{ij} =0$,  $f = (f_1, f_2, \ldots, f_d)$ is given measurable vector field function, and $g = g(t,x)$ is a given measurable function. Regularity estimates of $D_{ij}u$ in Lebesgue mixed-norm $L_{s,q}$ will be established for strong solutions $u$ of \eqref{eq7.41c}.

The interest in results concerning equations in spaces with mixed Sobolev norms arises, for example, when one wants to have better regularity of traces of solutions for each time slide while treating linear or nonlinear equations. See, for instance, \cite{M-Sol, Sol04}, where the initial-boundary value problem for the non-stationary Stokes system in mixed-norm Sobolev spaces was studied.
Besides its mathematical interests, our motivation to study the Stokes systems \eqref{eq7.41b} and \eqref{eq7.41c} with variable coefficients comes from the study of inhomogeneous fluid with density dependent viscosity, see \cite{AGZ, Lions}, as well as the study of the Navier-Stokes equations in general Riemannian manifolds, see \cite{DM}. Moreover,  such problem is also connected to the study of regularity for weak solutions of the Navier-Stokes equations as we will explain in Theorem \ref{NS-reg.thm} and Corollary \ref{coro} below.

In Theorem \ref{thm2.3} below, we establish mixed-norm Sobolev estimate for gradients of weak solutions of \eqref{eq7.41b}. Meanwhile,  Theorem \ref{thm2.3b} below is about mixed-norm Sobolev estimates for second spatial derivatives of solutions of \eqref{eq7.41c}. In Theorem \ref{NS-reg.thm} we give a new $\epsilon$-regularity criterion for  Leray-Hopf weak solutions of the Navier-Stokes equations. Observe that in \eqref{ellipticity}, the boundedness of coefficients is only required for the symmetric part of the coefficients matrix $a_{ij}$. Therefore, the coefficient $a_{ij}$ can be singular, and this is the key point in Theorem \ref{NS-reg.thm}, which is an application of Theorem \ref{thm2.3} to the Navier-Stokes equations.

Before we state these results precisely, we introduce some notation and assumptions that we use in this paper. In addition the ellipticity condition \eqref{ellipticity}, we need the following VMO$_x$ (vanishing mean oscillation in $x$) condition, first introduced in \cite{Kry07}, with constants $\delta \in (0,1)$ and $\alpha_0 \in [1, \infty)$ to be determined later.
\begin{assumption}[$\delta, \alpha_0$]
                        \label{assump1}
There exists $R_0\in (0, 1/4)$ such that for any $(t_0,x_0)\in
Q_{2/3}$ and $r\in (0,R_0)$, there exists $\bar a_{ij}(t) = \bar{b}_{ij}(t) + \bar{d}_{ij}(t)$ for which $\bar{b}_{ij}(t)$ and $\bar{d}_{ij}(t)$ satisfy \eqref{ellipticity}-\eqref{symmetry} and
$$
\fint_{Q_r(t_0,x_0)} |a_{ij}(t,x)-\bar a_{ij}(t)|^{\alpha_0}\,dx\,dt\le \delta^{\alpha_0},
$$
where $\delta \in (0,1)$ and $\alpha_0 \in [ 1,\infty)$.
\end{assumption}
We note that in  the Assumption ($\delta, \alpha_0$) above, $Q_\rho(z_0)$ denotes the parabolic cylinder centered at $z_0 = (t_0, x_0) \in \bR^{d+1}$ with radius $\rho>0$. Precisely,
\[
Q_\rho(z_0) = (t_0 - \rho^2, t_0] \times B_\rho(x_0),
\]
where $B_\rho(x_0)$ denotes the ball in $\mathbb{R}^d$ of radius $\rho$ centered at $x_0 \in \mathbb{R}^d$.  For abbreviation, when $z_0 = (0,0)$, we write $Q_\rho = Q_\rho(0,0)$ and $B_\rho = B_\rho(0)$.

For each $s, q \in [1, \infty)$ and each parabolic cylinder $Q  = \Gamma \times U \subset \bR \times \bR^{d}$, the mixed $(s,q)$-norm of a function $u$ defined in $Q$ is
\[
\norm{u}_{L_{s,q}(Q)} = \left[\int_{\Gamma} \left( \int_{U} |u(t,x)|^q \,dx \right)^{s/q} dt \right]^{1/s}.
\]
As usual, we denote
\[
L_{s,q}(Q)= \{u : Q \rightarrow \mathbb{R}: \|u\|_{L_{s,q}(Q)} <\infty\} \quad \text{and} \quad L_{q}(Q) = L_{q,q}(Q).
\]
We also denote the parabolic Sobolev space
\begin{align*}
W_{s,q}^{1,2}(Q)&=
\set{u:\,u, Du,D^2u\in L_{s,q}(Q), \ u_t \in L_{1}(Q)},
\end{align*}
and denote $\bH^{-1}_{s,q}(Q)$ the space consisting of all functions $u$ satisfying
$$
\set{u=\Div F +h\ \text{in} \ Q: \|F\|_{L_{s,q}(Q)}+\|h\|_{L_{s,q}(Q)} <\infty}.
$$
Naturally, for any $u\in \bH^{-1}_{s,q}(Q)$, we define the norm
\begin{equation*}
              %                              \label{eq10.36pm}
\|u\|_{\bH^{-1}_{s,q}(Q)}=\inf\set{\|F\|_{L_{s,q}(Q)}+\|h\|_{L_{s,q}(Q)}\,|\,u=\Div F +h},
\end{equation*}
and it is easy to see that $\bH^{-1}_{s,q}(Q)$ is a Banach space. Moreover, when $u \in \bH^{-1}_{s,q}(Q)$ and $u = \Div F +h$, we write
\[
\wei{u, \phi} = \int_{Q} \Big[ -F \cdot \nabla \phi + h \phi \Big] \,dx\, dt, \quad \text{for any} \quad \phi \in C_0^\infty(Q).
\]
We also define
$$
\cH^{1}_{s,q}(Q)=
\set{u:\,u,Du \in L_{s,q}(Q),u_t\in \bH^{-1}_{1,1}(Q)}.
$$
When $s= q$, we will omit one of these two indices and write
\[
L_q(Q) = L_{q,q}(Q), \quad W_{q}^{1,2}(Q) = W_{q,q}^{1,2}(Q), \quad \cH^{1}_{q}(Q) = \cH^{1}_{q,q}(Q), \quad \bH^{-1}_{q}(Q) = \bH^{-1}_{q,q}(Q).
\]

\begin{remark} Note that in our definition of $W_{s,q}^{1,2}(Q)$ we only require that $u_t \in L_{1}(Q)$, not $ u_t \in L_{s,q}(Q)$ as in the standard notation for the space $W_{s,q}^{1,2}(Q)$. Similarly in the definition of $\cH^{1}_{s,q}(Q)$, we only require $u_t \in \bH^{-1}_{1,1}(Q)$, not $u_t \in \bH^{-1}_{s,q}(Q)$. This is because for the Stokes systems, local weak solutions may not possess good regularity in the time variable in view of Serrin's example \cite{Serrin}. It is possible to further relax the regularity assumptions of $u$ in $t$ and also $p$ below, but we do not pursue in that direction.
\end{remark}

For $s, q \in (1, \infty)$, we denote $s', q'$ the conjugates of $s,q$, i.e.,
\begin{equation} \label{conjugate}
1/s + 1/s' =1, \quad 1/q + 1/q' =1
\end{equation}
Then, under the assumption that $d_{kj} \in L_{s', q'}(Q_1)$, $f_{kj} \in L_{1}(Q_1)$ for all $i, j = 1,2,\ldots, d$, $g \in L_1(Q_1)$, and the ellipticity assumption \eqref{ellipticity}, we say that a vector field function $u = (u_1, u_2, \ldots, u_d) \in \cH^{1}_{s,q}(Q_1)^d$ is a weak solution of the Stokes system \eqref{eq7.41b} in $Q_1$ if
\[
\int_{B_1} u (t, x) \cdot \nabla \varphi(x)\,  dx  = -\int_{B_1} g(t,x) \varphi(x)\, dx, \ \text{for a.e.} \,\, t \in (-1,0), \ \text{for all} \,\, \varphi \in C_0^\infty(B_1)
\]
and
\[
\wei{\partial_t u_{k}, \phi_k} + \int_{Q_1} a_{ij}(t,x) D_j u_k D_i \phi_{k}\, dx\, dt = - \int_{Q_1} f_{kj}D_j \phi_k\, dtdx,
\]
for any $k =1, 2,\ldots, d$ and $\phi = (\phi_1, \phi_2,\ldots, \phi_d) \in C_0^\infty(Q_1)^d$ such that $\Div[\phi(t, \cdot)] =0$ for $t \in (-1, 0)$. On the other hand, a vector field $u \in W_{1,1}^{1,2}(Q_1)^d$ is said to be a strong solution of \eqref{eq7.41c} on $Q_1$ if \eqref{eq7.41c} holds for a.e. $(t,x) \in Q_1$ for some $p \in L_1(Q_1)$ with $\nabla p \in L_1(Q_1)^d$.

We are ready to state the main results of the paper. Our first theorem is about the $L_{s,q}$-estimate for gradients of weak solutions to \eqref{eq7.41b}.
Ò\begin{theorem}
                                \label{thm2.3}
Let $s,q\in (1,\infty)$, $\nu \in (0,1)$, and $\alpha_0\in (\min(s,q)/(\min(s,q)-1),\infty)$.  There exists $\delta=\delta(d,\nu,s,q,\alpha_0)$ such that the following statement holds. Assume that \eqref{ellipticity}-\eqref{symmetry} and Assumption \ref{assump1} $(\delta, \alpha_0)$ hold.  Assume also that $d_{ij} \in L_{s', q'}(Q_1)$ with $s', q'$ being as in \eqref{conjugate} and $i, j = 1,2,\ldots, d$. Then, if $(u,p)\in \cH^1_{s,q}(Q_1)^d\times L_{1}(Q_1)$ is a weak solution to \eqref{eq7.41b} in $Q_1$, $f  \in L_{s,q}(Q_1)^{d\times d}$, and $g \in L_{s,q}(Q_1)$, it holds that
\begin{equation} \label{main-est-1}
\begin{split}
\|Du\|_{L_{s, q}(Q_{1/2})} & \le N(d,\nu,s,q, \alpha_0)\Big[ \|f\|_{L_{s,q}(Q_{1})} +  \|g\|_{L_{s,q}(Q_{1})}\Big] \\
& \quad +N(d,\nu,s,q,R_0, \alpha_0)\|u\|_{L_{s,q}(Q_{1})}.
\end{split}
\end{equation}
\end{theorem}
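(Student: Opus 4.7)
The plan is to use the mean oscillation / perturbation scheme of Krylov adapted to the Stokes setting, followed by a mixed-norm Fefferman-Stein sharp-function argument. The scheme proceeds in three stages: (i) establish a priori mixed-norm estimates (including interior H\"older control of $Dw$) for an auxiliary Stokes system whose coefficients $\bar a_{ij}(t)$ depend only on $t$; (ii) compare $u$ to such an auxiliary solution on small cylinders $Q_r(z_0)$ to obtain a mean oscillation bound for $Du$; (iii) upgrade the mean oscillation bound to the full $L_{s,q}$ estimate \eqref{main-est-1} by absorbing a small-$\delta$ term on the left.

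For stage (i), I would exploit the simpler structure of the $t$-only coefficient case: taking divergence of the first equation in \eqref{eq7.41b} and using $\Div u = g$ pins down the pressure as a Riesz-type transform of the data, leaving a parabolic system for $u$ with coefficients depending only on $t$. Standard techniques then yield interior H\"older and higher-integrability bounds for $Dw$. For stage (ii), given $z_0 = (t_0,x_0)$ and $r < R_0$, I would take $\bar a_{ij}(t)$ from Assumption \ref{assump1}$(\delta,\alpha_0)$ and decompose $u = w + v$ on $Q_r(z_0)$, with $w$ solving the Stokes system with coefficients $\bar a_{ij}(t)$ matching $u$ on the parabolic boundary, and $v = u - w$. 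Then $v$ satisfies a Stokes system whose forcing contains the error term $\Div\bigl[(a_{ij}-\bar a_{ij}) D_j u\bigr]$. H\"older's inequality with exponents $\alpha_0$ and $\alpha_0'$, combined with the smallness from Assumption \ref{assump1}, yields a mean oscillation estimate of the schematic form
\begin{equation*}
\fint_{Q_{\kappa r}(z_0)}\bigl|Du - (Du)_{Q_{\kappa r}(z_0)}\bigr|\,dz \lesssim \kappa^{\gamma}\, \mathcal{M}(|Du|^{\beta})(z_0)^{1/\beta} + \kappa^{-N}\delta^{\theta}\, \mathcal{M}(|Du|^{\beta_1})(z_0)^{1/\beta_1} + \text{data terms},
\end{equation*}
for some $\beta,\beta_1 \in (1,\min(s,q))$ and $\theta,\gamma > 0$; the condition $\alpha_0 > \min(s,q)/(\min(s,q)-1)$ guarantees $\alpha_0' < \min(s,q)$, so that $\beta_1$ can be chosen strictly below $\min(s,q)$ and maximal-function techniques apply.

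For stage (iii), I would apply the mixed-norm Fefferman-Stein sharp-function theorem in $L_{s,q}$ together with the boundedness of the Hardy-Littlewood maximal function in $L_{s,q}$. Choosing $\kappa$ small to control the first term on the right, and then $\delta$ small depending on $\kappa, s, q, d, \alpha_0$ to absorb the second, lets the $\|Du\|_{L_{s,q}}$ contribution be absorbed on the left. A standard cutoff/iteration argument on a sequence of cylinders between $Q_{1/2}$ and $Q_1$ then yields the global estimate \eqref{main-est-1}, with the $\|u\|_{L_{s,q}(Q_1)}$ term on the right arising from cutoff commutators.

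The hard part will be the simultaneous treatment of the pressure $p$ (which is only $L_1$ and globally coupled to $u$) and the singular antisymmetric component $d_{ij}$. To avoid confronting $p$ directly, one must work throughout with divergence-free test functions, forcing the use of Bogovskii-type corrections whenever a cutoff or restriction is introduced; these corrections contribute lower-order $L_{s,q}$ error terms that are ultimately absorbed. The antisymmetry $d_{ij} = -d_{ji}$ is essential in stage (ii): in pairings of the form $\int d_{ij} D_j u \cdot D_i \phi$, swapping $i \leftrightarrow j$ and exploiting antisymmetry eliminates the leading contribution, leaving only commutator-type remainders that can be controlled by $\|d_{ij}\|_{L_{s',q'}}$ together with the $L_{s,q}$ norm of $Du$, which matches precisely the integrability hypothesis on $d_{ij}$ in the statement.
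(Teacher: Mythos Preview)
Your overall strategy is the right one, but there is a genuine gap at stage~(ii): you propose to run the mean oscillation argument on $Du$ itself, and this cannot work for the Stokes system. The obstruction is precisely the pressure. For the homogeneous frozen-coefficient system \eqref{eq7.41}, take $u(t,x)=c(t)\nabla\phi(x)$ with $\phi$ harmonic and $c$ an arbitrary $L_1$ function of $t$; then $\Div u=0$ and $u_t-\bar a_{ij}(t)D_{ij}u+\nabla p=0$ with $p=-c'(t)\phi$, yet $Du=c(t)D^2\phi$ has no regularity in $t$ whatsoever. Consequently there is no H\"older-in-time bound of the form $[[Dv]]_{C^{1/2,1}}$ for the comparison solution $v$, and the term $\kappa^{\gamma}\,\cM(|Du|^{\beta})^{1/\beta}$ in your schematic estimate simply is not available. (This is exactly the phenomenon flagged in the remark after the definition of $W^{1,2}_{s,q}$ in the paper: local weak solutions of Stokes need not have good time regularity.) Your stage~(i) idea of pinning down $p$ by taking divergence runs into the same wall, since it produces $g_t$ on the right-hand side.

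The missing idea, which is what the paper actually does, is to run the mean oscillation argument on the vorticity $\omega=\nabla\times u$ instead of on $Du$. Taking curl kills the pressure, so $\omega$ satisfies a genuine parabolic equation (with bounded coefficients, since the skew part $d_{ij}$ drops out by antisymmetry) and enjoys honest $C^{1/2,1}$ interior estimates (Lemma~\ref{lem1.3}). One then obtains a mean oscillation bound for $\omega$ (Lemma~\ref{div-vorticity-oss.est}), applies the Fefferman--Stein / maximal-function machinery to get $\|\omega\|_{L_{s,q}}$, and finally recovers $\|Du\|_{L_{s,q}}$ from $\|\omega\|_{L_{s,q}}$ and $\|g\|_{L_{s,q}}$ via the purely spatial identity $\Delta u_i=D_i g+\sum_{k}D_k(D_k u_i-D_i u_k)$ (Lemma~\ref{lem4.10}). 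A secondary point: the paper's decomposition $u=w+v$ takes $w$ to be the solution of an initial value problem on $(-r^2,0)\times\bR^d$ with zero initial data, rather than a boundary value problem matching $u$ on the parabolic boundary; this sidesteps the nontrivial solvability theory for Stokes boundary value problems and, crucially, makes $\nabla\times w$ amenable to global duality estimates.
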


Similar to Theorem \ref{thm2.3}, we also obtain the following mixed-norm regularity estimates for solutions of the non-divergence form Stokes system \eqref{eq7.41c}.
\begin{theorem}
                                \label{thm2.3b}
Let $s, q\in (1,\infty)$ and $\nu \in (0,1)$. There exists $\delta=\delta(d,\nu,s,q) \in (0,1)$ such that the following statement holds. Suppose that $d_{ij} =0$, the ellipticity condition \eqref{ellipticity} and Assumption \ref{assump1} $(\delta, 1)$ hold.  Then, if $u\in W^{1,2}_{s,q}(Q_1)^d$ is a strong solution to \eqref{eq7.41b} in $Q_1$, $f \in L_{s,q}(Q_1)^{d\times d}$, and $D g \in L_{s,q}(Q_1)^d$, then  it follows that
\begin{equation} \label{main-est-2}
\begin{split}
\|D^2u\|_{L_{s, q}(Q_{1/2})} & \le N(d,\nu,q)\Big[ \|f\|_{L_{s,q}(Q_{1})} +  \|Dg\|_{L_{s,q}(Q_{1})}\Big]\\
& \quad +N(d,\nu,q,R_0)\|u\|_{L_{s,q}(Q_{1})}.
\end{split}
\end{equation}
\end{theorem}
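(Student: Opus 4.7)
My plan is to mirror the proof strategy of Theorem \ref{thm2.3}: first establish a sharp interior estimate in the ``frozen'' case where $a_{ij}$ depends only on $t$, then pass to the VMO setting by a freezing-plus-decomposition argument on small cylinders, and finally convert the resulting mean-oscillation inequality into the $L_{s,q}$ bound \eqref{main-est-2} through the Fefferman-Stein/Krylov sharp-function machinery (in its mixed-norm form).

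The base step treats the auxiliary non-divergence system
\[
v_t - \bar a_{ij}(t)\, D_{ij} v + \nabla p_v = f, \quad \Div v = g.
\]
Since $\bar a_{ij}$ depends only on $t$, a Helmholtz-type projection eliminates $p_v$ and yields a parabolic system for $v$ with bounded measurable $t$-only coefficients, for which sharp interior $L_{s,q}$ estimates of $D^2 v$ in terms of $\|f\|_{L_{s,q}}$, $\|Dg\|_{L_{s,q}}$, and $\|v\|_{L_{s,q}}$ follow from Mihlin-type multiplier bounds combined with the Benedek-Calderón-Panzone extension of Calderón-Zygmund theory to mixed norms. One also needs the corresponding Campanato-type decay of the mean oscillation of $D^2 v$ for solutions of the homogeneous ($f = 0$, $g = 0$) frozen system, which follows from higher-order interior regularity by a standard bootstrap.

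Next, on each cylinder $Q_r(z_0) \subset Q_1$ with $r < R_0$, I would approximate $a_{ij}$ by $\bar a_{ij}(t)$ from Assumption \ref{assump1} and decompose $u = v + w$, where $(v, p_v)$ solves the frozen system above in $Q_r(z_0)$ with parabolic-boundary data matching those of $(u, p)$, while $w = u - v$ satisfies
\[
w_t - \bar a_{ij}(t)\, D_{ij} w + \nabla (p - p_v) = (a_{ij} - \bar a_{ij})\, D_{ij} u, \quad \Div w = 0,
\]
with vanishing parabolic-boundary data. Applying the frozen estimate to $w$ and bounding its right-hand side by H\"older's inequality together with Assumption \ref{assump1} $(\delta,1)$ gives a contribution of order $\delta$ times the relevant norm of $D^2 u$; applying the homogeneous Campanato decay to $v$ on a nested sub-cylinder $Q_\rho(z_0) \subset Q_r(z_0)$ produces an algebraic factor $(\rho/r)^{\gamma}$. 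Combining these yields a mean-oscillation inequality of the schematic form
\[
\fint_{Q_\rho(z_0)} \bigl|D^2 u - c_\rho\bigr|
\le N (\rho/r)^{\gamma} \fint_{Q_r(z_0)} |D^2 u|
+ N \delta \fint_{Q_r(z_0)} |D^2 u|
+ N \fint_{Q_r(z_0)} \bigl(|f| + |Dg|\bigr)
\]
for suitable constants $c_\rho$.

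To conclude, I would invoke the mixed-norm Fefferman-Stein sharp-function theorem in the Krylov-style form used in the proof of Theorem \ref{thm2.3}, controlling the sharp maximal function of $D^2 u$ by the Hardy-Littlewood maximal functions of $D^2 u$, $f$, and $Dg$. Taking $L_{s,q}$ norms and using the mixed-norm maximal-function inequality, then choosing $\delta$ small enough to absorb the $D^2 u$ term on the right into the left, yields \eqref{main-est-2} on $Q_{1/2}$ after a standard covering/iteration argument. The main anticipated obstacle is the pressure handling in the decomposition step: the pressures $p_v$ and $p - p_v$ must be constructed so that neither leaks uncontrolled information back into the estimate, which is delicate because local solvability of the frozen Stokes system requires careful compatibility with $\Div u = g$; this also forces use of the divergence identity to exchange the missing $g_t$ for $Dg$ so that only $\|Dg\|$, not $\|g_t\|$, appears in the base estimate.
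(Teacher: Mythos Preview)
Your high-level strategy (freeze coefficients, decompose, derive a mean-oscillation bound, feed it into the mixed-norm sharp-function theorem, then iterate) matches the paper's architecture, but the implementation has a genuine gap precisely at the point you flag as ``the main anticipated obstacle.'' The paper's key device is to take the curl and work with the vorticity $\omega=\nabla\times u$ (and $D\omega$ in the non-divergence case), which annihilates the pressure outright. Concretely, the paper decomposes in the \emph{opposite} direction from yours: it lets $(w,p_1)$ solve the frozen Stokes system with right-hand side $I_{Q_r}\bigl(f+(a_{ij}-\bar a_{ij})D_{ij}u\bigr)$ and $\Div w=\phi_r(g-[g]_{B_r})$ on the strip $(-r^2,0)\times\bR^d$ with zero initial data, so that global parabolic estimates apply to $\nabla\times w$ (a scalar parabolic equation with no pressure), yielding the bound on $D^2w$ via the div--curl identity $\Delta w_i=D_i(\Div w)+\text{curl terms}$. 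Then $v=u-w$ solves the \emph{homogeneous} frozen system locally, and Lemma~\ref{lem1.3} (applied to $D\omega_2$) supplies the $C^{1/2,1}$ decay needed for the mean-oscillation estimate of $D\omega$, not of $D^2u$ directly. Only at the very end is $D^2u$ recovered from $D\omega$ and $Dg$ via $\Delta u_i=D_i g+\text{curl terms}$.

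Your proposed route---Helmholtz projection to remove $p_v$, and a decomposition where $(v,p_v)$ matches the parabolic-boundary data of $(u,p)$---runs into two concrete problems. First, the Helmholtz/Leray projector is a global object on $\bR^d$; it does not give you pressure-free interior estimates on $Q_r$, and local Stokes solutions famously lack good time regularity (cf.\ Serrin's example), so constructing $(v,p_v)$ with prescribed local parabolic-boundary data and controlling $p-p_v$ is exactly the difficulty the paper is designed to avoid. Second, even if you solved that, your mean-oscillation inequality is written for $D^2u$; but the Campanato decay you need comes from higher regularity of the homogeneous solution, and for Stokes that regularity is available for $\omega$ (a parabolic scalar), not directly for $D^2v$ without pressure control. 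In short: the missing idea is the vorticity reduction. Once you replace ``$D^2u$'' by ``$D\omega$'' throughout your oscillation argument, reverse the roles of $v$ and $w$ so that the inhomogeneous piece is solved on a strip with zero initial data, and use $\Div u=g$ only at the end to pass from $D\omega$ back to $D^2u$, your outline becomes essentially the paper's proof.
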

\begin{remark} By using interpolation and a standard iteration argument, \eqref{main-est-1} and \eqref{main-est-2} still hold if we replace the term $\norm{u}_{L_{s,q}(Q_1)}$ on the right-hand sides  by $\norm{u}_{L_{s,1}(Q_1)}$.
\end{remark}

%In general, the solutions $u$ in Theorems \ref{thm2.3} and \ref{thm2.3b} may not have good regularity in the time variable. Nevertheless, since the systems are linear, we can use the technique of mollifications. For example, from \eqref{eq7.41b} we get for sufficiently small $\varepsilon$
%$$
%u^{(\varepsilon)}_t-D_{i}(a_{ij}D_j u^{(\varepsilon)})+\nabla p^{(\varepsilon)}=\Div f^{(\varepsilon)}+D_i((a_{ij}D_j u)^{(\varepsilon)}-a_{ij}D_j u^{(\varepsilon)})
%$$
%and $\Div u^{(\varepsilon)}=0$ in a small cylinder. Then under the assumption that $u,Du\in L_{s,q}(Q_1)$ and $p\in L_1(Q_1)$, we have $u^{(\varepsilon)}\in \cH^1_{s,q}(Q_{3/4})^d$ and
%$$
%\|(a_{ij}D_j u)^{(\varepsilon)}-a_{ij}D_j u^{(\varepsilon)}\|_{L_{s,q}(Q_{3/4})}\to 0\quad\text{as}\quad \varepsilon\to 0
%$$
%Now it suffices to apply Theorem \ref{thm2.3} to $u^{(\varepsilon)}$ and take the limit as $\varepsilon\to 0$.

Several remarks regarding our Theorems \ref{thm2.3} and \ref{thm2.3b} are in order. First of all, even when $q=s =2$ and $g \equiv 0$, the estimates \eqref{main-est-1} and \eqref{main-est-2} are  already new for the non-stationary Stokes system with variable coefficients. These estimates are known as Caccioppoli's type estimates. When $a_{ij}=\delta_{ij}$, $f\equiv 0$, and $g\equiv 0$, Caccioppoli type estimates for Stokes system were established in  \cite{Bum} by using special test functions. However, it is not so clear that this method can be extended to systems with variable coefficients and nonzero right-hand side.

Next, we emphasize that the estimates in Theorems \ref{thm2.3} and \ref{thm2.3b}  do not contain any pressure term on the right-hand sides. Thus, our results seem to be new even when the coefficients are constants. One can easily see that the estimates in Theorems \ref{thm2.3} and \ref{thm2.3b} imply the available regularity estimates such as \cite[Proposition 6.7, p. 84]{Seregin} in which the regularity for the pressure $p$ is required.

We note that $L_q$-estimates for non-stationary Stokes system with constant coefficients were established in \cite{Solonnikov} many years ago, and recently in \cite{DLW} with different approach. For stationary Stokes system with variable, \textup{VMO} or partially \textup{VMO} coefficients, both interior and boundary estimates were studied recently in \cite{CL17, Dong-Kim, Dong-Kim-Bull}, where slightly more general operators but with bounded coefficients were considered. However, the approaches used in these papers do not seem to be applicable to the non-stationary Stokes system.

Finally, we mention that the smallness Assumption \ref{assump1} $(\delta, \alpha_0)$ is necessary for both  Theorem \ref{thm2.3} and Theorem \ref{thm2.3b}. See an example in the well-known paper \cite{M} for linear elliptic equations in which $d_{ij} =0$, and an example in \cite{FP} in which $(a_{ij})$ is an identity matrix and $(d_{ij})$ is bounded but not small in the \textup{BMO} semi-norm.

Next, we give an application of our $L_{s,q}$-estimates for the Stokes system. Consider the Navier-Stokes equations
\begin{equation} \label{NS.eqn}
u_t - \Delta u + (u \cdot \nabla) u + \nabla p =0, \quad \Div u =0.
\end{equation}
Let $u$ be a Leray-Hopf weak solution of \eqref{NS.eqn} in $Q_1$.  For each $i, j = 1,2,\ldots, d$, let $ d_{ij}$ be the solution of  the  equation
\begin{equation} \label{d-matrix.def}
\left\{
\begin{array}{cccl}
\Delta d_{ij} & = & D_j u_i-D_i u_j & \quad \text{in}\quad  B_1 \\
d_{ij} & = & 0 & \quad \text{on} \quad \partial B_1.
\end{array} \right.
\end{equation}
Observe that for a.e. $t\in (-1, 0)$, we have $u(t, \cdot) \in L^2(B_1)$. Therefore, the existence and uniqueness of $d_{ij}(t,\cdot) \in W^{1,2}_0(B_1)$ follows, and the solution $d_{ij}(t,\cdot)$ satisfies the standard energy estimate, see \eqref{L-p-d.est}.  Let $[d_{ij}]_{B_{\rho}(x_0)}(t)$ be the average of $d_{ij}$ with respect to $x$ on $B_{\rho}(x_0)$.
As a corollary of Theorem \ref{thm2.3}, we obtain the following new $\epsilon$-regularity criterion for the Navier-Stokes equation \eqref{NS.eqn}.

\begin{theorem}
        \label{NS-reg.thm}
Let $\alpha_0\in (2(d+2)/(d+4),\infty)$. There exists $\epsilon \in (0,1)$ sufficiently small depending only on the dimension $d$ and $\alpha_0$ such that, if $u$ is a Leray-Hopf weak solution of \eqref{NS.eqn} in $Q_1$ and
\begin{equation} \label{epsilon-criteria}
\sup_{z_0 \in Q_{2/3}}\sup_{\rho \in (0, R_0)}\left( \fint_{Q_{\rho}(z_0)} |d_{ij}(t,x) - [d_{ij}]_{B_\rho(x_0)}(t)|^{\alpha_0}\,dx\,dt\right)^{1/{\alpha_0}} \leq \epsilon,
\end{equation}
for every  $i, j = 1,2,\ldots, d$ and for some $R_0 \in (0,1/2)$ and with $d_{ij}$ defined in \eqref{d-matrix.def}, then $u$ is smooth in $Q_{1/2}$.
\end{theorem}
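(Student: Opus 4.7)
The plan is to recast the Navier--Stokes system as a linear Stokes system of the form \eqref{eq7.41b} whose antisymmetric part of the coefficient is precisely the matrix $d_{ij}$ given by \eqref{d-matrix.def}, and then run a bootstrap using Theorem \ref{thm2.3}. The key algebraic identity is that, using $\Delta d_{ij}=D_ju_i-D_iu_j$, $\Div u=0$, and the antisymmetries $d_{ij}=-d_{ji}$ and $D_iD_j=D_jD_i$, the vector field $h_j:=u_j+D_id_{ij}$ is divergence-free and harmonic in $B_1$ for each $t$, and
\[
u_jD_ju_k=-D_i(d_{ij}D_ju_k)+D_j(h_ju_k).
\]
Consequently $(u,p)$ solves \eqref{eq7.41b} with $a_{ij}=\delta_{ij}+d_{ij}$, symmetric part $b_{ij}=\delta_{ij}$, antisymmetric singular part $d_{ij}$, $g\equiv 0$, and $f_{kj}=-h_ju_k$. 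The structural conditions \eqref{ellipticity}--\eqref{symmetry} hold with $\nu=1/2$, and, choosing $\bar a_{ij}(t)=\delta_{ij}+[d_{ij}]_{B_\rho(x_0)}(t)$, Assumption \ref{assump1}$(\delta,\alpha_0)$ reduces to exactly \eqref{epsilon-criteria}, provided $\epsilon$ is taken smaller than the $\delta$ produced by Theorem \ref{thm2.3}.

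With this reformulation in hand, I would run a finite bootstrap starting from the Leray--Hopf class. The energy estimate and parabolic Sobolev embedding give $u\in L_{2(d+2)/d}(Q_1)$. For the auxiliary fields, standard Calder\'on--Zygmund estimates applied to \eqref{d-matrix.def} for each fixed $t$ yield $\|d_{ij}(t,\cdot)\|_{W^{1,p}(B_1)}\leq C(p)\|u(t,\cdot)\|_{L^p(B_1)}$, while interior estimates for the harmonic field give $\|h_j\|_{L^\infty(B_r)}\leq C(r)\|u(t,\cdot)\|_{L^2(B_1)}$ for $r<1$. Choose $s=q$ slightly larger than $2(d+2)/d$, so that $\min(s,q)/(\min(s,q)-1)<\alpha_0$, which matches the hypothesis $\alpha_0>2(d+2)/(d+4)$ exactly at the first iteration. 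Apply Theorem \ref{thm2.3} after rescaling and covering by cylinders $Q_\rho(z_0)\subset Q_{2/3}$ to obtain $Du\in L_{s,q}(Q_\rho)$. Parabolic Sobolev embedding upgrades the integrability of $u$, and iterating this step finitely many times---at each stage using the improved integrability of $u$ to control both $d_{ij}$ in $L_{s',q'}$ and the right-hand side $u_k h_j$ in $L_{s,q}$---produces $u\in L_{s_\ast}(Q_{1/2})$ for some $s_\ast>d+2$. Once $u$ belongs to a subcritical Serrin class, standard bootstrap for the Navier--Stokes equations promotes $u$ to $C^\infty(Q_{1/2})$.

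The main obstacle is the bootstrap itself: the coefficient $d_{ij}$ is produced from $u$, so every iteration must simultaneously match the integrability of $u$ to what Theorem \ref{thm2.3} demands of $d_{ij}\in L_{s',q'}(Q_1)$ and of the new forcing $u h\in L_{s,q}(Q_1)$, while keeping the smallness hypothesis \eqref{epsilon-criteria} available on each rescaled cylinder. The first iteration is the tight one: $u$ is only in the energy class, which forces $s'=q'$ just under $2(d+2)/(d+4)$, and this is precisely why the assumption $\alpha_0>2(d+2)/(d+4)$ appears in the statement of Theorem \ref{NS-reg.thm}.
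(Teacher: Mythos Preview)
Your proposal is correct and follows essentially the same route as the paper: rewrite Navier--Stokes as the Stokes system \eqref{eq7.41b} with $a_{ij}=\delta_{ij}+d_{ij}$ and forcing $f=hu$ where $h$ is harmonic in $x$ (hence locally bounded by the energy norm of $u$), then bootstrap via Theorem \ref{thm2.3} together with parabolic Sobolev embedding on a finite sequence of shrinking cylinders until a subcritical Serrin class is reached. One small slip: at the first step you should take $s=q=2(d+2)/d$ exactly rather than ``slightly larger,'' since this is all the Leray--Hopf energy class yields; the strict inequality $\alpha_0>2(d+2)/(d+4)=s/(s-1)$ is precisely what makes Theorem \ref{thm2.3} applicable at this borderline exponent.
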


The parameter $\alpha_0$ in the above theorem can be less than $2$, which might be useful in applications. We would like to note that many other $\epsilon$-regularity criteria for solutions to the Navier-Stokes equations were established, for instance, in \cite{CKN, GKT}. See also \cite[Chapter 6]{Seregin} for further discussion on this. To the best of our knowledge, compared to these known criteria, our result in Theorem \ref{NS-reg.thm} is completely new.
As an immediate consequence of Theorem \ref{NS-reg.thm}, we obtain the following regularity criteria for weak solutions to the Navier-Stokes equations, which implies Serrin's regularity criterion in the borderline case established by Fabes-Jones-Rivi\`{e}re \cite{FJR} and by Struwe \cite{Str88}.

\begin{corollary}
            \label{coro}
Assume that $u$ is a Leray-Hopf weak solution of \eqref{NS.eqn} in $Q_1$.

(i) Let $s,q\in (1,\infty]$ be such that $2/s+d/q = 1$. Suppose that $u \in L_s((-1,0);L^w_{q}(B_{1}))$ when $s<\infty$, or the $L_{\infty}((-1,0); L^w_{d}(B_1))$ norm of $u$ is sufficiently small.  Then, $u$ is smooth in $Q_{1/2}$.

(ii) Let $s,q\in (1,\infty]$ be such that $2/s+d/q = 1$. Suppose that $u \in L_s^w((-1,0);L^w_{q}(B_{1}))$ with a sufficiently small norm.  Then, $u$ is smooth in $Q_{1/2}$.

(iii) Let $\alpha\in [0,1)$, $\beta\in [0,d)$, and $s,q\in (1,\infty)$ be constants satisfying
\begin{equation}
                            \label{eq5.13}
\frac{2\alpha} s+\frac \beta q
=\frac 2 s+\frac d q-1(>0),\quad  \frac 1 s<\frac 1 2+\frac 1 {d+2},\quad \text{and}\quad \frac 1 q<\frac 1 2+\frac 1 {d+2}+\frac 1 d.
\end{equation}
Suppose that $u\in \mathscr{M}_{s,\alpha}((-1,0); \mathscr{M}_{q,\beta}(B_1))$ with a sufficiently small norm. Then, $u$ is smooth in $Q_{1/2}$.
\end{corollary}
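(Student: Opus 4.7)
The strategy is to deduce each of parts (i)--(iii) from Theorem~\ref{NS-reg.thm} by verifying, under the given hypothesis, that the oscillation smallness condition \eqref{epsilon-criteria} on the antisymmetric matrix $d_{ij}$ defined by \eqref{d-matrix.def} holds for an $\epsilon$ as small as Theorem~\ref{NS-reg.thm} requires and for some $R_0 \in (0,1/2)$; smoothness of $u$ on $Q_{1/2}$ then follows at once.

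To estimate the left-hand side of \eqref{epsilon-criteria}, I would first apply Poincar\'e's inequality on each slice $B_\rho(x_0)$, reducing the problem to controlling $\rho\,\bigl(\fint_{Q_\rho(z_0)}|\nabla d_{ij}|^{\alpha_0}\,dx\,dt\bigr)^{1/\alpha_0}$. Rewriting $\Delta d_{ij}= D_j u_i-D_i u_j = \dv(u_i e_j-u_j e_i)$, standard Calder\'on--Zygmund theory for the Dirichlet problem \eqref{d-matrix.def} gives, on each time slice,
\[
\|\nabla d_{ij}(t,\cdot)\|_{X(B_1)} \le C\|u(t,\cdot)\|_{X(B_1)}
\]
for $X\in\{L_q,\ L^w_q,\ \mathscr{M}_{q,\beta}\}$: the $L_q$ bound is classical, the weak-$L_q$ version follows by real interpolation from two nearby $L_{q_i}$ bounds, and the Morrey version follows via Campanato or Green's-function estimates for the Dirichlet Laplacian. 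Since the hypothesis $\alpha_0>2(d+2)/(d+4)$ in Theorem~\ref{NS-reg.thm} leaves us free to take $\alpha_0<\min(s,q)$, H\"older's inequality (or the embedding $L^w_q(B_\rho)\hookrightarrow L_{\alpha_0}(B_\rho)$ at the cost of a factor of order $|B_\rho|^{1/\alpha_0-1/q}$) converts the spatial $X(B_1)$-bound into a bound for the $L_{\alpha_0}$-average over $B_\rho(x_0)$.

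Integrating in $t$ with the analogous step (H\"older in (i), Lorentz embedding in (ii), the Morrey definition in (iii)), one arrives at
\[
\rho\left(\fint_{Q_\rho(z_0)}|\nabla d_{ij}|^{\alpha_0}\,dx\,dt\right)^{1/\alpha_0} \le C\,\rho^{\,1-2/s-d/q+2\alpha/s+\beta/q}\,\|u\|_{X((t_0-\rho^2,t_0)\times B_1)},
\]
with $\alpha=\beta=0$ in cases (i)--(ii). The Serrin scaling $2/s+d/q=1$ (respectively, its Morrey analog in \eqref{eq5.13}) makes the power of $\rho$ vanish, giving a scale-invariant bound by the local norm of $u$. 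Smallness is then obtained as follows. In (i) with $s<\infty$, uniform absolute continuity of $t\mapsto\|u(t,\cdot)\|_{L^w_q(B_1)}^s\in L_1((-1,0))$ forces the local $L_s L^w_q$-norm on $(t_0-\rho^2,t_0)\times B_1$ to tend to zero uniformly in $t_0$ as $\rho\to 0$; for $s=\infty$ and $q=d$, the assumed global smallness of the $L_\infty L^w_d$-norm bounds the local quantity directly. In (ii), the small global weak-Lorentz norm dominates the local one, using that Calder\'on--Zygmund respects weak Lorentz spaces. In (iii), the Morrey norm is by definition the supremum over $(z_0,\rho)$ of normalized local $L_q$-averages, so its smallness yields \eqref{epsilon-criteria} at once.

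The most delicate step I anticipate is the Calder\'on--Zygmund estimate for the Dirichlet Laplacian on $B_1$ in Morrey (and, to a lesser extent, weak-Lebesgue) spaces, together with the exponent bookkeeping verifying that \eqref{eq5.13} cancels all $\rho$-powers; once these ingredients are in place, all three cases close in parallel.
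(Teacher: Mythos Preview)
Your strategy is sound and matches the paper's: verify \eqref{epsilon-criteria} via Poincar\'e-type inequalities and Calder\'on--Zygmund bounds for $\nabla d_{ij}$ in terms of $u$, then invoke Theorem~\ref{NS-reg.thm}. Two differences are worth noting.

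First, the paper organizes the argument more economically by reducing (i) to (ii) (absolute continuity in $t$ gives local smallness when $s<\infty$) and (ii) to (iii) via the embedding $L_q^w\hookrightarrow\mathscr{M}_{q_1,\lambda}$ with $\lambda=d(1-q_1/q)$. Thus only the Morrey Calder\'on--Zygmund estimate is needed, rather than three separate versions. Your parallel treatment of the three cases is fine, but longer.

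Second, and more substantively, your use of the \emph{ordinary} Poincar\'e inequality forces $\alpha_0<q$, hence $q>2(d+2)/(d+4)$. This is harmless in (i) and (ii), where $2/s+d/q=1$ gives $q\ge d$. But in (iii) it does not cover the full range allowed by \eqref{eq5.13}: the third condition there, $1/q<1/2+1/(d+2)+1/d$, is equivalent to $dq/(d-q)_+>2(d+2)/(d+4)$, not to $q>2(d+2)/(d+4)$. The paper closes this gap by using the \emph{Sobolev--Poincar\'e} inequality on each slice, which only requires $\alpha_0<dq/(d-q)_+$; this is exactly what permits, e.g., $q$ arbitrarily close to $1$ when $d=3$. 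Replace your Poincar\'e step by Sobolev--Poincar\'e and the exponent bookkeeping in (iii) will match \eqref{eq5.13} exactly.
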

Here $L_{q}^w$ denotes the weak-$L_s$ space, and $\mathscr{M}_{q,\beta}$ denotes the Morrey space
$$
\|f\|_{\mathscr{M}_{q,\beta}(B_1)}:=\left(\sup_{x_0\in \overline{B_1},\,r>0}r^{-\beta}\int_{B_r(x_0)\cap B_1}|f|^q\,dx\right)^{1/q}.
$$
Notice that in particular, when $d=3$, Corollary \ref{coro} (i) recovers a result by Kozono \cite{Ko98}. When $d=3$ and $q<\infty$, Corollary \ref{coro} (ii) was obtained in \cite{KK04}. Our approach only uses linear estimates and is very different from these in \cite{Ko98,KK04}.
It is also worth mentioning that we can take $q > 1$ and $s>10/7$ in Corollary \ref{coro} (iii) in the case when $d=3$.

We now briefly describe our methods in the proofs of the main results. Our approaches to prove Theorems \ref{thm2.3} and \ref{thm2.3b} are based on perturbation using equations with coefficients frozen in the spatial variable and sharp function technique introduced in \cite{Kry07, Krylov} and developed in \cite{DK16}. As we already mentioned, unlike in the stationary case studying in \cite{CL17, Dong-Kim, Dong-Kim-Bull}, even when $s=q =2$, the estimates \eqref{main-est-1} and \eqref{main-est-2} are not available to start the perturbation process. Our main idea to overcome this is to use the equations of vorticity,  which is in the spirit of Serrin \cite{Serrin}. Therefore, we need to derive several necessary estimates for the vorticity, and then, use the divergence equation and these estimates to derive desired estimates for the solutions. To prove Theorem \ref{NS-reg.thm}, we first rewrite the Navier-Stokes equations \eqref{NS.eqn} into a Stokes system in divergence form \eqref{eq7.41b} with coefficients that have singular skew-symmetric part $(d_{ij})$ defined in \eqref{d-matrix.def}. Then, we iteratively apply Theorem \ref{thm2.3} and the Sobolev embedding theorem to successively improve the regularity of weak solutions.

The rest of the paper is organized as follow. In Section \ref{Preli}, we recall several estimates for sharp functions, and derive necessary estimates of solution and its vorticity for Stokes systems with coefficients that only depend on the time variable. Section \ref{div-part} is devoted to prove Theorem \ref{thm2.3}, while  the proof of Theorem \ref{thm2.3b} is presented in Section \ref{non-div-se}. Finally, in the last section, Section \ref{NS-se}, we provide the proof of Theorem \ref{NS-reg.thm} as well as the proof of Corollary \ref{coro}.

\section{Preliminary estimates} \label{Preli}
\subsection{Sharp function estimates}
The following result is a special case of \cite[Theorem 2.3 (i)]{DK16}. Let $\cX\subset \bR^{d+1}$ be a space of homogeneous type, which is endowed with the parabolic distance and a doubling measure $\mu$ that is naturally inherited from the Lebesgue measure. As in \cite{DK16}, we take a filtration of partitions of $\cX$ (cf. \cite{MR1096400}) and for any $f\in L_{1,\text{loc}}$ we define its dyadic sharp function $f^{\#}_{\text{dy}}$ in $\cX$ associated with the filtration of partitions. Also for each $q \in [1, \infty]$, $A_q$ denotes the Muckenhoupt class of weights.

\begin{theorem} \label{Feffer}
Let $s, q \in (1,\infty)$, $K_0 \geq 1$ and $\omega \in A_{q}$ with $[\omega]_{A_q} \leq K_0$.  Suppose that $f \in L_{s}(\omega d\mu)$.
Then,
\[
\norm{f}_{L_s(\omega d\mu)} \leq  N\left[\norm{f^{\#}_{\text{dy}}}_{L_{s}(\omega d\mu)} + \mu(\mathcal{X})^{-1} \omega(\textup{supp}(f))^{\frac{1}{s}} \norm{f}_{L_1(\mu)} \right],
\]
where $N>0$ is a constant depending only on $s$, $q$, $K_0$, and the doubling constant of $\mu$.
\end{theorem}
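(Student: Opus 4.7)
The plan is to reduce the estimate to the standard good-$\lambda$ comparison between the dyadic maximal function $\mathcal{M}_{\text{dy}}f$ and the dyadic sharp function $f^{\#}_{\text{dy}}$, weighted by $\omega\in A_q$, and then to account for the boundary correction term that appears when $\mathcal{X}$ has finite measure.

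First, I would introduce the dyadic Hardy--Littlewood maximal operator $\mathcal{M}_{\text{dy}}$ associated to the filtration of partitions on $\mathcal{X}$. Since $\mu$ is doubling and the partitions come from a Christ-type system, the Lebesgue differentiation theorem in this dyadic setting yields $|f(x)|\le \mathcal{M}_{\text{dy}}f(x)$ for $\mu$-a.e.\ $x$, provided we first center $f$ by its average over the ambient cube. The key inequality is the good-$\lambda$ bound
\[
\mu\bigl(\{\mathcal{M}_{\text{dy}}f>2\lambda,\ f^{\#}_{\text{dy}}\le \gamma\lambda\}\cap C\bigr)\le N\gamma\,\mu(C)
\]
for every maximal dyadic cube $C$ in the level set $\{\mathcal{M}_{\text{dy}}f>\lambda\}$, which is established by a standard Calder\'on--Zygmund stopping-time argument on the filtration. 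This gives the unweighted good-$\lambda$ inequality.

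Next, I would upgrade to the weighted version. Because $\omega\in A_q$ with $[\omega]_{A_q}\le K_0$, $\omega$ lies in $A_\infty$ with a constant controlled by $K_0$, so there exist $\eta,C>0$ depending only on $d$, $q$, $K_0$ such that $\omega(E)/\omega(C)\le C\bigl(\mu(E)/\mu(C)\bigr)^{\eta}$ for every measurable $E\subset C$. Applying this to the set in the good-$\lambda$ inequality above and summing over the maximal cubes $C$ yields
\[
\omega\bigl(\{\mathcal{M}_{\text{dy}}f>2\lambda,\ f^{\#}_{\text{dy}}\le \gamma\lambda\}\bigr)\le N\gamma^{\eta}\,\omega\bigl(\{\mathcal{M}_{\text{dy}}f>\lambda\}\bigr).
\]
Integrating against $s\lambda^{s-1}\,d\lambda$, choosing $\gamma$ small, and absorbing the resulting term gives
\[
\|\mathcal{M}_{\text{dy}}f\|_{L_s(\omega\,d\mu)}\le N\,\|f^{\#}_{\text{dy}}\|_{L_s(\omega\,d\mu)},
\]
valid as long as the left-hand side is finite; a truncation argument $f\mapsto f\mathbf{1}_{\{\mathcal{M}_{\text{dy}}f\le M\}}$ followed by $M\to\infty$ removes this a~priori assumption, and the weighted $L_s$ estimate $\|f\|_{L_s(\omega d\mu)}\le N\|\mathcal{M}_{\text{dy}}f\|_{L_s(\omega d\mu)}$ (Muckenhoupt's theorem in the dyadic setting) finishes the dominant contribution.

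The remaining issue, which is the main obstacle, is that on a space of homogeneous type of finite measure the sharp function does not see constants, so $f^{\#}_{\text{dy}}$ alone cannot bound $f$. To handle this, I would write $f=(f-f_{\mathcal{X}})+f_{\mathcal{X}}$, where $f_{\mathcal{X}}=\mu(\mathcal{X})^{-1}\int_{\mathcal{X}} f\,d\mu$. The first piece has zero average and the argument above applies with $\mathcal{M}_{\text{dy}}$ understood relatively to the top cube, giving the bound by $\|f^{\#}_{\text{dy}}\|_{L_s(\omega d\mu)}$. For the constant piece, H\"older's inequality on $\operatorname{supp}(f)$ yields
\[
\|f_{\mathcal{X}}\|_{L_s(\omega d\mu)}\le \mu(\mathcal{X})^{-1}\,\omega(\operatorname{supp}(f))^{1/s}\,\|f\|_{L_1(\mu)},
\]
which is exactly the correction term in the statement. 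Combining these two pieces completes the proof; the delicate point is that all constants depend only on $s$, $q$, $K_0$, and the doubling constant of $\mu$, which follows because the exponent $\eta$ in the $A_\infty$ reverse-H\"older estimate is quantitative in $[\omega]_{A_q}$.
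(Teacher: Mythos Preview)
The paper does not actually prove this theorem: it is quoted as a special case of \cite[Theorem~2.3~(i)]{DK16}, so there is no in-paper argument to compare against. Your good-$\lambda$ route (unweighted stopping-time estimate, upgrade via the $A_\infty$ property of $\omega\in A_q$, then integration in $\lambda$) is exactly the standard mechanism behind the result in \cite{DK16}, so in spirit your approach matches the cited source.

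Two points in your write-up need tightening. First, the inequality $\|f\|_{L_s(\omega\,d\mu)}\le \|\mathcal{M}_{\text{dy}}f\|_{L_s(\omega\,d\mu)}$ is just the pointwise bound $|f|\le \mathcal{M}_{\text{dy}}f$ from Lebesgue differentiation, not Muckenhoupt's theorem (which goes the other way). Second, and more substantively, your treatment of the constant piece is not quite right as written: $f_{\mathcal X}$ is a constant on all of $\mathcal X$, so $\|f_{\mathcal X}\|_{L_s(\omega\,d\mu)}=|f_{\mathcal X}|\,\omega(\mathcal X)^{1/s}$, which is in general larger than the claimed $|f_{\mathcal X}|\,\omega(\operatorname{supp} f)^{1/s}$. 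The fix is to restrict to $\operatorname{supp}(f)$ \emph{before} splitting: since $f=f\,\mathbf 1_{\operatorname{supp}(f)}$,
\[
\|f\|_{L_s(\omega\,d\mu)}\le \|f-f_{\mathcal X}\|_{L_s(\omega\,d\mu)}+|f_{\mathcal X}|\,\omega(\operatorname{supp} f)^{1/s},
\]
and the first term is handled by the zero-mean good-$\lambda$ argument (noting $(f-f_{\mathcal X})^{\#}_{\text{dy}}=f^{\#}_{\text{dy}}$), while the second gives exactly the correction term. With this adjustment your argument is correct. For the a~priori finiteness needed in the absorption step, the cleaner truncation is $f\mapsto f\,\mathbf 1_{\{|f|\le M\}}$ (bounded on a finite-measure space, hence with bounded dyadic maximal function), rather than truncating on a superlevel set of $\mathcal{M}_{\text{dy}}f$.
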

The following lemma is a direct corollary of Theorem \ref{Feffer}.
\begin{lemma}
    \label{mixed-norm-lemma}
For any $s,q \in (1, \infty)$, there exists a constant $N = N(d, s, q) >0$ such that
\[
\norm{f}_{L_{s,q}(Q_R)} \leq N\Big[ \norm{I_{Q_R}f^{\#}_{\textup{dy}}}_{L_{s,q}(Q_R)} +   R^{\frac{2}{s}+\frac {d}{q} - d-2} \norm{f}_{L_1(Q_R)}\Big].
\]
for any $R>0$ and $f \in L_{s,q}(Q_R)$.
\end{lemma}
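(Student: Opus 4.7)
The plan is to deduce the mixed-norm estimate from Theorem~\ref{Feffer} by a weighted-extrapolation argument in the spatial variable. Throughout, I view $Q_R$ as a space of homogeneous type under the parabolic quasi-metric and Lebesgue measure $\mu$, equipped with a dyadic filtration as in \cite{DK16}; the doubling constant is independent of $R$, so Theorem~\ref{Feffer} applies on $Q_R$ with constants uniform in $R$.

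The key step is to apply Theorem~\ref{Feffer} with the outer exponent equal to $s$ and with tensor-product weights of the form $\omega(t,x) = w(x)$, where $w$ is an $A_q$-weight on $\bR^d$ with bounded characteristic. A direct computation with parabolic cubes shows that any such $\omega$ lies in the parabolic $A_q$-class with $[\omega]_{A_q}$ bounded by $[w]_{A_q(\bR^d)}$. Theorem~\ref{Feffer} then produces a uniform family of weighted inequalities
\[
\|f\|_{L_s(Q_R,\, w(x)\,dx\,dt)} \le N \Big[ \|f^{\#}_{\textup{dy}}\|_{L_s(Q_R,\, w(x)\,dx\,dt)} + |Q_R|^{-1}\,\omega(Q_R)^{1/s}\,\|f\|_{L_1(Q_R)} \Big],
\]
indexed by $w \in A_q(\bR^d)$ with uniform $A_q$-characteristic.

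To pass from this one-parameter family to the mixed norm, I then invoke the Rubio de Francia extrapolation theorem in its off-diagonal/vector-valued form: a uniform family of weighted $L_s$-estimates in $(t,x)$ over all $w \in A_q(\bR^d)$ upgrades to the mixed-norm bound $\|f\|_{L_{s,q}(Q_R)} \le N \bigl[ \|I_{Q_R}\, f^{\#}_{\textup{dy}}\|_{L_{s,q}(Q_R)} + \text{boundary} \bigr]$. To identify the scaling of the boundary term, I test against the constant function $c := |Q_R|^{-1}\|f\|_{L_1(Q_R)}$, whose mixed norm equals $c \cdot |B_R|^{1/q}(R^2)^{1/s} \sim R^{2/s + d/q - d - 2}\|f\|_{L_1(Q_R)}$, which matches the claimed exponent precisely.

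I expect the main technical obstacle to be the extrapolation step in the presence of a nontrivial lower-order term: the standard Rubio de Francia algorithm is formulated for pure operator inequalities, so one either has to carry the boundary constant explicitly through the iteration of the Hardy--Littlewood maximal function underlying the algorithm, or first reduce to the mean-zero case $\int_{Q_R} f = 0$ by subtracting the average (whose sharp function vanishes), in which the boundary contribution drops out entirely. Both routes are standard but need careful bookkeeping of constants, especially to preserve the $R$-dependence in the final exponent.
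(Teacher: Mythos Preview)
Your overall strategy---apply Theorem~\ref{Feffer} with a one-parameter family of product weights and then extrapolate---is exactly the right one, and is what the paper does. But you have placed the weight in the wrong variable, and this causes the extrapolation step to produce the wrong mixed norm.

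Recall that $\|f\|_{L_{s,q}(Q_R)}$ has $L_s$ in $t$ on the \emph{outside} and $L_q$ in $x$ on the \emph{inside}. With a purely spatial weight $\omega(t,x)=w(x)$ and exponent $s$ in Theorem~\ref{Feffer}, Fubini gives
\[
\|f\|_{L_s(Q_R,\,w\,dx\,dt)}
=\Big(\int_{B_R} w(x)\,\|f(\cdot,x)\|_{L_s(-R^2,0)}^{\,s}\,dx\Big)^{1/s}
=\|\Psi\|_{L_s(B_R,w)},
\]
where $\Psi(x):=\|f(\cdot,x)\|_{L_s(-R^2,0)}$. Any extrapolation in the spatial variable of the resulting family $\|\Psi\|_{L_s(w)}\le N\|\Phi\|_{L_s(w)}$, $w\in A_q(\bR^d)$, will at best yield control of $\|\Psi\|_{L_q(B_R)}=\|f\|_{L_q(B_R;\,L_s(-R^2,0))}$, i.e.\ the mixed norm with the order of $t$ and $x$ \emph{reversed}. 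This is not comparable to $\|f\|_{L_{s,q}(Q_R)}$ in the direction you need (Minkowski's inequality goes the wrong way in one of the two cases $s\lessgtr q$), so the ``off-diagonal/vector-valued'' extrapolation you invoke cannot deliver the stated bound.

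The fix---and this is what the paper does---is to weight in \emph{time}: set $\tilde\omega(t,x)=\omega(t)$ with $\omega\in A_q((-R^2,0))$, and apply Theorem~\ref{Feffer} on $Q_R$ at the \emph{inner} exponent $q$. Because $\tilde\omega$ is constant in $x$,
\[
\|f\|_{L_q(Q_R,\tilde\omega)}=\|\psi\|_{L_q((-R^2,0),\omega)},\qquad \psi(t):=\|f(t,\cdot)\|_{L_q(B_R)},
\]
and the inequality from Theorem~\ref{Feffer} becomes a one-dimensional weighted estimate $\|\psi\|_{L_q(\omega)}\le N\|\phi\|_{L_q(\omega)}$ with $\phi(t):=\|f^{\#}_{\textup{dy}}(t,\cdot)+(|f|)_{Q_R}\|_{L_q(B_R)}$, valid for every $\omega\in A_q$. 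Standard Rubio de Francia extrapolation in $t$ now changes the outer exponent from $q$ to $s$; specializing to $\omega\equiv 1$ gives precisely $\|f\|_{L_{s,q}(Q_R)}\le N\|f^{\#}_{\textup{dy}}+(|f|)_{Q_R}\|_{L_{s,q}(Q_R)}$. Note also that absorbing the constant $(|f|)_{Q_R}$ into $\phi$ from the start removes your ``boundary term'' concern entirely: extrapolation applies cleanly to the pair $(\psi,\phi)$, and at the end $\|(|f|)_{Q_R}\|_{L_{s,q}(Q_R)}=N(d)R^{2/s+d/q-d-2}\|f\|_{L_1(Q_R)}$, matching the exponent you computed.
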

\begin{proof} For $t\in (-R^2,0)$, let
\[
\psi(t) = \norm{f(t, \cdot)}_{L_{q}(B_R)} \quad \text{and} \quad
\phi(t) = \norm{f^{\#}_{\textup{dy}}(t,\cdot)+(|f|)_{Q_R}}_{L_{q}(B_R)}.
\]
Moreover, for any $\omega \in A_{q}((-R^2,0))$ with $[\omega]_{A_{q}} \leq K_0$, we write $\tilde{\omega}(t,x) = \omega(t)$ for all $(t,x) \in Q_R$.
Then, by applying Theorem \ref{Feffer} with $\cX=Q_R$, we obtain
\[
\norm{\psi}_{L_{q}((-R^2,0), \omega)}= \norm{f}_{L_{q}(Q_R, \tilde{\omega})}\le N\norm{f^\#_{\text{dy}}+(|f|)_{Q_R}}_{L_{q}(Q_R, \tilde{\omega})}
=  N \norm{\phi}_{L_{q}((-R^2,0), \omega)},
\]
with $N = N(d, K_0, s)$. %In other words,
%\[
%\norm{\psi}_{L_{q_0}(B_R, \omega)} \leq N \norm{g}_{L_{q_0}(B_R, \omega)}, \quad \forall  \omega \in A_{q_0}, \quad [\omega]_{A_{q_0}} \leq K_0.
%\]
Then, by the extrapolation theorem (see, for instance, \cite[Theorem 2.5]{DK16}), we see that
\[
\norm{\psi}_{L_{s}((-R^2,0), \omega)} \leq 4N \norm{\phi}_{L_{s}((-R^2,0), \omega)}, \quad \forall \ \omega \in A_{s}, \quad [\omega]_{A_{s}} \leq K_0.
\]
Note that in the special case when $\omega\equiv 1$, $\norm{\psi}_{L_{s}((-R^2,0), \omega)}  = \norm{f}_{L_{s,q}(Q_R)}$ and
\[
 \norm{\phi}_{L_{s}((-R^2,0), \omega)} \le \norm{f^{\#}_{\text{dy}}}_{L_{s,q}(Q_R)} + R^{2/s+d/q} (|f|)_{Q_R}.
\]
Therefore, the desired estimate follows.
\end{proof}
\subsection{Stokes systems with simple coefficients}
In this subsection, we consider the time-dependent Stokes system with coefficients that only depend on the time variable
\begin{equation}
                        \label{eq7.41}
u_t-D_{i}(a_{ij}(t)D_j u)+\nabla p=0,\quad \Div u= 0,
\end{equation}
where $a_{ij}=  b_{ij}(t) + d_{ij}(t)$ with $b_{ij} = b_{ji}$ and $d_{ij} = -d_{ji}$ for all $i, j = \{1, 2,\ldots, d\}$. Moreover, $a_{ij}$ satisfies the ellipticity condition with ellipticity constant $\nu\in (0,1)$: for any $\xi\in \bR^d$,
\begin{equation}
                        \label{eq8.13}
\nu |\xi|^2\le b_{ij}\xi_i\xi_j,\quad |b_{ij}|\le \nu^{-1}.
\end{equation}
We have the following gradient estimate.
\begin{lemma}
                \label{lem1.2}
Assume that \eqref{eq8.13} holds. Let $q_0\in (1,\infty)$, and $(u,p)\in\cH^1_{q_0}(Q_1)^d\times L_{1}(Q_1)$ be a weak solution to \eqref{eq7.41} in $Q_1$. Then we have
\begin{equation}
                                        \label{eq7.46}
\|D^2u\|_{L_{q_0}(Q_{1/2})} +  \|Du\|_{L_{q_0}(Q_{1/2})}\le N(d,\nu,q_0)\|u-[u]_{B_1}(t)\|_{L_{q_0}(Q_{1})},
\end{equation}
%and
%$$
%\|D^2u\|_{L_{q_0}(Q_{1/2})}
%\le N\|u-[u]_{B_1}(t)\|_{L_{q_0}(Q_{1})},
%$$
where $[u]_{B_1}(t)$ is the average of $u(t,\cdot)$ in $B_1$.
\end{lemma}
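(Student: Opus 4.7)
The idea is to pass to the vorticity equation, which eliminates the pressure and reduces the problem to classical parabolic theory. Since $a_{ij}(t)$ is $x$-independent, $D_i(a_{ij}(t)D_j\phi) = a_{ij}(t)D_iD_j\phi$ for smooth $\phi$, and the skew-symmetric part vanishes identically: $d_{ij}(t)D_iD_j\phi \equiv 0$ by equality of mixed partials. Thus the momentum equation rewrites as $u_t - b_{ij}(t)D_iD_j u + \nabla p = 0$. Setting $\omega_{kl} := D_l u_k - D_k u_l$, taking $D_l$ of the $k$-th component minus $D_k$ of the $l$-th cancels $\nabla p$ by equality of mixed partials and yields the pressure-free scalar-type parabolic equation
\[
\partial_t \omega_{kl} - b_{ij}(t) D_i D_j \omega_{kl} = 0 \quad \text{in } Q_1,
\]
interpreted in the distributional sense, which is meaningful since $Du \in L_{q_0}(Q_1)$.

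I would then combine two ingredients. First, because the equation for $\omega$ has bounded, uniformly elliptic, $x$-independent coefficients, classical parabolic interior estimates give
\[
\|\omega\|_{L_{q_0}(Q_{r_1})} + \|D\omega\|_{L_{q_0}(Q_{r_1})} \le N \|\omega\|_{L_{q_0}(Q_{r_2})}, \qquad 1/2 \le r_1 < r_2 < 1.
\]
Second, $\Div u = 0$ together with the definition of $\omega$ gives the Poisson relation $\Delta u_j = \sum_i D_i \omega_{ij}$ on $B_1$. Interior Calder\'on--Zygmund estimates for $-\Delta$ on concentric balls then yield, at each time slice,
\[
\|Du(t,\cdot)\|_{L_{q_0}(B_{r_0})} + \|D^2 u(t,\cdot)\|_{L_{q_0}(B_{r_0})} \le N\bigl(\|\omega(t,\cdot)\|_{L_{q_0}(B_{r_1})} + \|D\omega(t,\cdot)\|_{L_{q_0}(B_{r_1})} + \|u(t,\cdot) - (u)_{B_{r_1}}(t)\|_{L_{q_0}(B_{r_1})}\bigr),
\]
and integrating in $t$ gives the mixed-norm analogue.

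What remains, and is the main obstacle, is the Caccioppoli-type bound $\|\omega\|_{L_{q_0}(Q_{r_2})} \le N \|u - [u]_{B_1}(t)\|_{L_{q_0}(Q_1)}$. For $q_0 = 2$ this is the classical divergence-free test function argument: test the Stokes equation against $\phi = (u - [u]_{B_1})\eta^2 - w$, where $\eta$ is a smooth spatial cutoff and $w$ is the Bogovskii corrector satisfying $\Div w = (u - [u]_{B_1}) \cdot \nabla(\eta^2)$, supported in the annulus where $\eta$ changes. Since $\Div \phi = 0$ the pressure drops out, and uniform ellipticity of $b_{ij}$ together with the skew-symmetry of $d_{ij}$ yields the $L_2$-Caccioppoli. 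For $q_0 \ge 2$ I would upgrade this using the interior smoothing for $\omega$ (via the fundamental solution of the $x$-independent parabolic operator, or Moser iteration), converting $L_2$-control on $\omega$ into $L_{q_0}$-control on a slightly smaller cylinder. For $q_0 \in (1,2)$ I would argue by duality against the backward adjoint problem, which is of the same type because the coefficients are $x$-independent, transferring derivatives onto $u$ via integration by parts to reduce to the $q_0 \ge 2$ case. Combining these four ingredients completes the proof; the $q_0 \in (1,2)$ duality step is expected to be the most delicate.
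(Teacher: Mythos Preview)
Your first three ingredients---the vorticity equation (with the skew-symmetric part dropping out), the parabolic interior estimate $\|D\omega\|_{L_{q_0}(Q_{r_1})}\le N\|\omega\|_{L_{q_0}(Q_{r_2})}$, and the elliptic div--curl relation $\Delta u_j=\sum_i D_i\omega_{ij}$---are exactly what the paper uses. The divergence, and the unnecessary difficulty, lies in your fourth ingredient.

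You treat the bound $\|\omega\|_{L_{q_0}(Q_{r_2})}\le N\|u-[u]_{B_1}(t)\|_{L_{q_0}(Q_1)}$ as an independent Caccioppoli estimate to be proved for the Stokes system, and propose a three-case argument (Bogovski\u{\i} corrector for $q_0=2$, Moser-type upgrading for $q_0\ge 2$, duality for $q_0<2$). The paper avoids this entirely. After chaining your first three ingredients one obtains, for any pair of concentric cylinders,
\[
\|D^2u\|_{L_{q_0}(Q_{r_1})}\le N\|Du\|_{L_{q_0}(Q_{r_2})}.
\]
Now the paper simply invokes the multiplicative (Gagliardo--Nirenberg) inequality in the spatial variables,
\[
\|Du\|_{L_{q_0}(Q_{r_2})}\le \varepsilon\|D^2u\|_{L_{q_0}(Q_{r_2})}+N\varepsilon^{-1}\|u-[u]_{B_1}(t)\|_{L_{q_0}(Q_{r_2})},
\]
and then a standard iteration over a sequence of nested cylinders absorbs the $D^2u$ term on the right. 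This works uniformly for every $q_0\in(1,\infty)$ and requires neither the Bogovski\u{\i} operator, nor any divergence-free test function, nor a duality argument. Your route would likely succeed, but the duality step you flag as ``most delicate'' is simply not needed: interpolation plus iteration replaces it with a two-line argument.
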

\begin{proof}
By a mollification in $x$, we see that $\omega=\nabla \times u$ is a weak solution to the parabolic equation
$$
\omega_t-D_{i}(a_{ij}(t)D_j \omega)=0  \quad \text{in} \quad Q_1.
$$
Observe that since the matrix $(d_{ij}(t))_{n \times n}$ is skew-symmetric, $\omega$ is indeed a weak solution of
\[
\omega_t-D_{i}(b_{ij}(t)D_j \omega)=0 \quad \text{in} \quad Q_1.
\]
Since the matrix $(b_{ij})_{n\times n}$ satisfies the ellipticity condition as in \eqref{eq8.13}, we can apply the local $\cH^1_p$ estimate for linear parabolic equations to obtain
\begin{equation}
                            \label{eq7.57}
\|D\omega\|_{L_{q_0}(Q_{2/3})}\le N(d,\nu,q_0)\|\omega\|_{L_{q_0}(Q_{3/4})}.
\end{equation}
Since $u$ is divergence free, we have
$$
\Delta u_i=  -D_i  \sum_{k=1}^d D_k u_k + \sum_{k= 1 }^d D_{kk}u_i
=  \sum_{k\neq i}D_k(D_k u_i-D_i u_k).
$$
Thus by the local $W^1_p$ estimate for the Laplace operator,
$$
\|Du\|_{L_{q_0}(Q_{1/2})}\le N \|\omega\|_{L_{q_0}(Q_{2/3})}+ N\|u\|_{L_{q_0}(Q_{2/3})}.
$$
Similarly,
\begin{align*}
&\|D^2u\|_{L_{q_0}(Q_{1/2})}\le N\|D\omega\|_{L_{q_0}(Q_{2/3})}+N\|Du\|_{L_{q_0}(Q_{2/3})}
\le N\|Du\|_{L_{q_0}(Q_{3/4})}\\
&\le \varepsilon\|D^2u\|_{L_{q_0}(Q_{3/4})}+N\varepsilon^{-1}\|u-[u]_{B_1}(t)\|_{L_{q_0}(Q_{3/4})}
\end{align*}
for any $\varepsilon\in (0,1)$, where we used \eqref{eq7.57} in the second inequality, and multiplicative inequalities in the last inequality.
It then follows from a standard iteration argument that
$$
\|D^2u\|_{L_{q_0}(Q_{1/2})}
\le N\|u-[u]_{B_1}(t)\|_{L_{q_0}(Q_{1})},
$$
from which and multiplicative inequalities we obtain \eqref{eq7.46}.
The lemma is proved.
\end{proof}
Recall that for each $\alpha \in (0, 1]$, and each parabolic cylinder $Q \in \bR^{n+1}$, we write
\[
[[u]]_{C^{\alpha/2, \alpha}(Q)} = \sup_{\substack{(t,x), (s,y) \in Q\\ (t,x) \not=(s,y)}} \frac{|u(t,x)-u(s,y)|}{|t-s|^{\alpha/2} + |x-y|^{\alpha}},
\]
and
\[
\|u\|_{C^{\alpha/2, \alpha}(Q)} = \|u\|_{L_\infty(Q)} + [[u]]_{C^{\alpha/2, \alpha}(Q)}.
\]
\begin{lemma}
                    \label{lem1.3}
Under the assumptions of Lemma \ref{lem1.2}, we have
$$
\|\omega\|_{C^{1/2,1}(Q_{1/2})}\le N(d,\nu,q_0)\|\omega\|_{L_{q_0}(Q_{1})},
%\quad \|D\omega\|_{C^{1/2,1}(Q_{1/2})}\le N(d,\nu,q_0)\|D\omega\|_{L_{q_0}(Q_{1})},
$$
where $\omega=\nabla\times u$.
\end{lemma}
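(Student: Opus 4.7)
The plan is to exploit the fact that $\omega = \nabla \times u$ satisfies a parabolic equation whose coefficients depend only on $t$, so that interior spatial derivatives of $\omega$ of arbitrary order can be controlled, and time regularity follows from the PDE itself. Recall from the proof of Lemma \ref{lem1.2} that
$$
\omega_t - D_i(b_{ij}(t) D_j \omega) = 0 \quad \text{in } Q_1,
$$
where $(b_{ij}(t))$ satisfies \eqref{eq8.13}. The crucial observation is that the coefficients are constant in $x$, so, via a standard mollification in the spatial variable, for every multi-index $\beta$ the function $D^\beta \omega$ is again a weak solution of the same equation on a slightly smaller cylinder.

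First, I would iterate the local $\cH^1_{q_0}$ estimate for scalar parabolic equations with $t$-only, bounded and elliptic coefficients (which is classical) on a finite chain of shrinking cylinders $Q_{r_k}$ with $1/2 < r_k < 1$, applied successively to $\omega, D\omega, D^2\omega, \ldots$. This yields for every $m \in \mathbb{N}$
$$
\|D^m \omega\|_{L_{q_0}(Q_{3/4})} \le N(d,\nu,q_0,m)\|\omega\|_{L_{q_0}(Q_{1})}.
$$
Choosing $m$ large enough (depending only on $d$ and $q_0$) and applying Sobolev embedding in the spatial variable slicewise, together with the classical parabolic embedding $\cH^1_{q_0} \hookrightarrow C^{\alpha/2,\alpha}$ for some $\alpha>0$, I obtain
$$
\|\omega\|_{L_\infty(Q_{1/2})} + \|D\omega\|_{L_\infty(Q_{1/2})} + \|D^2\omega\|_{L_\infty(Q_{1/2})} \le N\|\omega\|_{L_{q_0}(Q_{1})}.
$$
In particular $\omega$ is Lipschitz in $x$ uniformly on $Q_{1/2}$, which takes care of the spatial part of the $C^{1/2,1}$-norm.

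For the time regularity, I would rewrite the equation. Since $b_{ij}$ is independent of $x$, expanding the divergence gives
$$
\omega_t = b_{ij}(t)\, D_{ij}\omega
$$
pointwise a.e. on $Q_{1/2}$, and the $L_\infty$ bound on $D^2\omega$ from the previous step together with $|b_{ij}| \le \nu^{-1}$ yields $\|\omega_t\|_{L_\infty(Q_{1/2})} \le N\|\omega\|_{L_{q_0}(Q_1)}$. Thus $\omega$ is Lipschitz (hence $1/2$-H\"older) in $t$, uniformly in $x$, with the correct constant. Combining this with the spatial Lipschitz bound establishes
$$
\|\omega\|_{C^{1/2,1}(Q_{1/2})} \le N(d,\nu,q_0)\|\omega\|_{L_{q_0}(Q_1)},
$$
as claimed.

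There is no real obstacle here; the only slightly delicate point is the bootstrap that upgrades an $L_{q_0}$ bound to pointwise control of $\omega$ and its first two spatial derivatives. This is handled by the standard trick of differentiating the equation in $x$ (legitimate because $b_{ij}$ is independent of $x$), iterating a local parabolic Calder\'on-Zygmund/Sobolev-type estimate on a nested family of cylinders, and finally invoking Sobolev embedding; all constants produced depend only on $d$, $\nu$, and $q_0$ because the ellipticity constants of $(b_{ij})$ do.
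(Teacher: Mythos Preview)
Your proposal is correct and follows essentially the same approach as the paper, which states only that ``the lemma follows by using mollifications in $x$ and the standard interior estimate for parabolic equations.'' You have simply unpacked what that one-line proof means: mollify in $x$, differentiate the equation (legitimate since $b_{ij}$ is $x$-independent), iterate the local parabolic estimate on nested cylinders to control spatial derivatives, and read off the time regularity from $\omega_t=b_{ij}(t)D_{ij}\omega$.
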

\begin{proof}
The lemma follows by using mollifications in $x$ and the standard interior estimate for parabolic equations.
\end{proof}

%\section{Proofs of Theorem \ref{thm2.3} and Theorem \ref{thm2.3b}}

\section{Divergence form Stokes system and proof of Theorem \ref{thm2.3}} \label{div-part}

Note that for each integrable function $f$ defined in a measurable set $Q \subset \bR^{d+1}$,  $(f)_Q$ denotes the average of $f$ in $Q$, i.e.,
\[
(f)_{Q} = \fint_{Q} f(t,x) \,dx\,dt.
\]
We need to establish several lemmas in order to prove Theorem \ref{thm2.3}. Our first lemma gives the control of $(|Du|^{q_0})_{Q_{r/2}}^{1/q_0}$ for weak solution $u$ of the Stokes system \eqref{eq7.41b}.
\begin{lemma}
                    \label{lem2.4b}
Let $\delta, \nu \in (0,1)$, $q_0\in (1,\infty)$, $q\in (q_0,\infty)$. Suppose that \eqref{ellipticity}-\eqref{symmetry} hold, and Assumption \ref{assump1} $(\delta, \alpha_0)$ holds with $\alpha_0 \geq \frac{q_0q}{q-q_0}$. Then, for any $r\in (0,R_0)$ and weak solution $(u,p)\in \cH^1_{s,q}(Q_{r})^d\times L_{1}(Q_{r})$  of \eqref{eq7.41b} in $Q_r$, we have
\begin{align*}
(|Du|^{q_0})_{Q_{r/2}}^{1/q_0}
&\le N(d,\nu,q_0)\Big((|f|^{q_0})_{Q_{r}}^{1/q_0}
+r^{-1}(|u-[u]_{B_r}(t)|^{q_0})_{Q_{r}}^{1/q_0}\Big)\\
&\quad +N(d,\nu,q_0) \delta (|Du|^{q})_{Q_{r}}^{1/q} + N(d, q_0) (|g|^{q_0})_{Q_{r}}^{1/{q_0}}.
\end{align*}
\end{lemma}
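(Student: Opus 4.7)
The plan is a standard freeze-the-coefficients perturbation argument coupled with Lemma~\ref{lem1.2}. First, I would invoke Assumption~\ref{assump1}~$(\delta,\alpha_0)$ at scale $r$ to produce $\bar a_{ij}(t) = \bar b_{ij}(t) + \bar d_{ij}(t)$ satisfying \eqref{ellipticity}--\eqref{symmetry} together with
\[
\fint_{Q_r}|a_{ij}(t,x)-\bar a_{ij}(t)|^{\alpha_0}\,dx\,dt \le \delta^{\alpha_0},
\]
and rewrite \eqref{eq7.41b} on $Q_r$ in the form $u_t - D_i(\bar a_{ij}(t) D_j u) + \nabla p = \Div F$, $\Div u = g$, where the matrix $F$ consists of $f$ plus a perturbation piece of schematic form $(\bar a - a)Du$ absorbing the VMO error.

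Next, I would split $u = v + w$ by letting $(w,\pi)$ solve the zero initial and lateral boundary value problem in $Q_r$ for the frozen-coefficient Stokes system with right-hand side $(\Div F, g)$. Invoking $L_{q_0}$-solvability for Stokes systems whose coefficients depend only on $t$ (available via a Bogovskii-type resolution of $\Div G = g - (g)_{B_r}(t)$ in $B_r$ followed by the standard Stokes $L_p$-theory in, e.g., \cite{Solonnikov}) would yield
\[
\|Dw\|_{L_{q_0}(Q_r)} \le N\bigl(\|F\|_{L_{q_0}(Q_r)} + \|g\|_{L_{q_0}(Q_r)}\bigr).
\]
Then $v = u - w$ solves the homogeneous frozen-coefficient Stokes system in $Q_r$, and a rescaling of Lemma~\ref{lem1.2} from $Q_1$ to $Q_r$ delivers
\[
(|Dv|^{q_0})^{1/q_0}_{Q_{r/2}} \le N r^{-1} (|v - [v]_{B_r}(t)|^{q_0})^{1/q_0}_{Q_r}.
\]

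To close the argument, I would combine these bounds via the triangle inequality together with the Poincar\'e estimate $r^{-1}(|w - [w]_{B_r}(t)|^{q_0})^{1/q_0}_{Q_r} \le N(|Dw|^{q_0})^{1/q_0}_{Q_r}$. For the perturbation contribution in $F$, H\"older's inequality with exponents $\alpha_0/q_0$ and $q/q_0$—the hypothesis $\alpha_0 \ge q_0 q/(q-q_0)$ ensures $1/q_0 \ge 1/\alpha_0 + 1/q$—combined with the VMO smallness produces
\[
(|F|^{q_0})^{1/q_0}_{Q_r} \le (|f|^{q_0})^{1/q_0}_{Q_r} + N\delta(|Du|^q)^{1/q}_{Q_r}.
\]
Assembling these inequalities then yields the claimed estimate.

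The main technical obstacle is establishing the $L_{q_0}$-solvability of the auxiliary zero-boundary Stokes system with $t$-dependent coefficients while simultaneously handling the divergence datum $g$ and the distributional source $\Div F$, and keeping careful track of the pressure $\pi$; this is where results on the simple-coefficient Stokes system do the heavy lifting. A secondary subtlety is verifying that $u \in \cH^1_{s,q}(Q_r)$ has enough local $L_{q_0}$-regularity to legitimize the manipulations, which follows routinely from H\"older on the bounded cylinder $Q_r$ since $q > q_0$.
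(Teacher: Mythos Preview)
Your strategy—freeze coefficients, split $u=v+w$, estimate $w$ by solvability of the inhomogeneous frozen-coefficient system, estimate $v$ via a rescaled Lemma~\ref{lem1.2}, and close with the triangle and Poincar\'e inequalities together with H\"older on the perturbation term—is exactly the paper's. The one substantive difference is in how the auxiliary pair $(w,p_1)$ is set up and estimated.

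The paper poses the $w$-problem on $(-r^2,0)\times\bR^d$ with only a zero initial condition, and obtains the $L_{q_0}$ bound for $Dw$ by passing to the vorticity $\nabla\times w$, which solves a pressure-free parabolic equation with bounded $t$-dependent coefficients; the whole-space parabolic $L_{q_0}$ estimate for $\nabla\times w$ together with $\Div w=I_{Q_r}g$ then gives $\|Dw\|_{L_{q_0}}\le N(\|f\|_{L_{q_0}}+\|(a-\bar a)Du\|_{L_{q_0}}+\|g\|_{L_{q_0}})$. This sidesteps both issues you flag: there is no lateral boundary, hence no compatibility constraint on $\Div w$, and there is no need to invoke a Stokes $L_p$-theory for $t$-dependent coefficients in a bounded domain (Solonnikov's results in \cite{Solonnikov} are for the Laplacian, so your citation does not quite cover the operator $D_i(\bar a_{ij}(t)D_j\,\cdot\,)$). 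Your bounded-domain formulation also forces you to replace $g$ by $g-[g]_{B_r}(t)$ for solvability, after which $\Div v=[g]_{B_r}(t)$ rather than $0$; Lemma~\ref{lem1.2} still applies in that case (the identity $\Delta v_i=\sum_{k\neq i}D_k(D_kv_i-D_iv_k)$ only uses that $\Div v$ is independent of $x$), but this needs to be said. In short, your route is correct but the paper's whole-space vorticity argument is cleaner and removes precisely the ``main technical obstacle'' you identify.
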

\begin{proof} Let $(w,p_1)$ be a weak solution to
$$
w_t-D_{i}(\bar a_{ij}(t)D_j w)+\nabla p_1=\Div (I_{Q_r}f)+D_i(I_{Q_r}(a_{ij}-\bar a_{ij})D_j u),\quad \Div w= I_{Q_r} g
$$
in $(-r^2,0)\times \bR^d$ with the zero initial condition on $\{t=-r^2\}$. Then $\nabla\times w$ is a so called {\em adjoint solution} to the parabolic equation. By duality and the equation $\Div w= I_{Q_r} g$,
\begin{align*}
&\|Dw\|_{L_{q_0}((-r^2,0)\times \bR^d)}
\le N(d,q_0)\Big [\|\nabla\times w\|_{L_{q_0}((-r^2,0)\times \bR^d)} + \|I_{Q_r} g \|_{L_{q_0}((-r^2,0)\times \bR^d)} \Big]\\
&\le N(d,\nu,q_0)\|f\|_{L_{q_0}(Q_r)}
+N(d,\nu,q_0)\|(a_{ij}-\bar a_{ij})D_j u\|_{L_{q_0}(Q_r)} +N(d, q_0) \|g\|_{L_{q_0}(Q_r)}.
\end{align*}
Thus, we have
\begin{align}
                            \label{eq8.37b}
&(|Dw|^{q_0})_{Q_{r}}^{1/{q_0}}\le N(d,\nu,q_0)\Big[ (|f|^{q_0})_{Q_{r}}^{1/{q_0}} + (|(a_{ij}-\bar a_{ij})D_j u|^{q_0})_{Q_{r}}^{1/{q_0}}\Big] +  N(d,q_0)(|g|^{q_0})_{Q_{r}}^{1/{q_0}}\nonumber\\
&\le N(d,\nu,q_0)\Big[ (|f|^{q_0})_{Q_{r}}^{1/{q_0}}+
\delta(|Du|^{q})_{Q_{r}}^{1/q}\Big] +  N(d, q_0) (|g|^{q_0})_{Q_{r}}^{1/{q_0}},
\end{align}
where we used Assumption \ref{assump1} with $\alpha_0 \geq \frac{q_0 q}{q-q_0}$ and H\"older's inequality for the middle term on the right-hand side in the last inequality. Now $(v,p_2):=(u-w,p-p_1)$ satisfies
$$
v_t-D_{i}(\bar a_{ij}(t)D_j v)+\nabla p_2=0,\quad \Div v=0
$$
in $Q_r$. By Lemma \ref{lem1.2} with a scaling, we have
\begin{equation}
                            \label{eq8.43b}
(|Dv|^{q_0})_{Q_{r/2}}^{1/{q_0}}\le r^{-1}(|v-[v]_{B_{r}}(t)|^{q_0})_{Q_{r}}^{1/{q_0}}.
\end{equation}
By \eqref{eq8.37b}, \eqref{eq8.43b}, the triangle inequality, and the Poincar\'e inequality, we get the desired inequality.
\end{proof}

In the next lemma we prove a mean oscillation estimate of $\nabla \times u$.

\begin{lemma} \label{div-vorticity-oss.est}
Let $q_1 \in (1, \infty), q_0 \in (1, q_1)$, $\delta \in (0,1)$, $R_0\in (0,1/4)$, $r \in (0, R_0)$, $\kappa \in (0, 1/2)$. Assume that \eqref{ellipticity}-\eqref{symmetry} hold, and Assumption \ref{assump1} ($\delta, \alpha_0$) holds with $\alpha_0 \ge \frac{q_0 q_1}{q_1-q_0}$. Suppose that $(u,p)\in \cH^1_{s_1,q_1}(Q_r)^d \times L_{1}(Q_r)$ is a weak solution to \eqref{eq7.41b} in $Q_r$. Then it holds that
\begin{align*}
(\omega -(\omega)_{Q_{\kappa r}})_{Q_{\kappa r}} & \leq N(d, \nu, q_0) \kappa^{-\frac{n+2}{q_0}}(|f|^{q_0})_{Q_{r}}^{1/q_0} + N(d, q_0) \kappa (|g|^{q_0})_{Q_{r}}^{1/{q_0}} \\
& \quad \quad + N(n, \nu, q_0, q_1)\big(\kappa^{-\frac{n+2}{q_0}} \delta + \kappa\big) (|Du|^{q_1})_{Q_{r}}^{1/q_1},
\end{align*}
where $\omega = \nabla \times u$.
\end{lemma}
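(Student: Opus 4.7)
The plan is to run the same freezing-coefficients decomposition used in the proof of Lemma~\ref{lem2.4b}. Let $(w,p_1)$ be the unique solution on $(-r^2,0)\times\bR^d$ with zero initial data at $t=-r^2$ of
$$
w_t - D_i(\bar a_{ij}(t)D_j w) + \nabla p_1 = \Div(I_{Q_r}f) + D_i(I_{Q_r}(a_{ij}-\bar a_{ij})D_j u), \quad \Div w = I_{Q_r} g,
$$
and set $v = u-w$, $p_2 = p-p_1$. Since both $u$ and $w$ have divergence $g$ in $Q_r$, the pair $(v,p_2)$ satisfies the homogeneous constant-coefficient Stokes system $v_t - D_i(\bar a_{ij}(t)D_j v) + \nabla p_2 = 0$ with $\Div v = 0$ in $Q_r$. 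Writing $\omega = \omega_v + \omega_w$ with $\omega_v = \nabla\times v$ and $\omega_w = \nabla\times w$, the triangle inequality gives
$$
(|\omega - (\omega)_{Q_{\kappa r}}|)_{Q_{\kappa r}} \le (|\omega_v - (\omega_v)_{Q_{\kappa r}}|)_{Q_{\kappa r}} + 2(|\omega_w|)_{Q_{\kappa r}}.
$$

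For the $\omega_w$ term, I will invoke the duality/adjoint $L_{q_0}$-estimate for the whole-space constant-coefficient Stokes system (the same estimate implicit in the proof of Lemma~\ref{lem2.4b}), which controls $\|\omega_w\|_{L_{q_0}((-r^2,0)\times\bR^d)}$ by the $L_{q_0}(Q_r)$-norm of the datum $f + (a-\bar a)Du$ \emph{with no dependence on $g$}, because the curl annihilates the pressure and is insensitive to the divergence constraint. Combining this with Assumption~\ref{assump1} and H\"older's inequality (using $\alpha_0 \ge q_0q_1/(q_1-q_0)$) yields $(|\omega_w|^{q_0})_{Q_r}^{1/q_0} \le N[(|f|^{q_0})_{Q_r}^{1/q_0} + \delta(|Du|^{q_1})_{Q_r}^{1/q_1}]$. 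Enlarging the integration domain then introduces the factor $\kappa^{-(d+2)/q_0}$:
$$
(|\omega_w|)_{Q_{\kappa r}} \le (|\omega_w|^{q_0})_{Q_{\kappa r}}^{1/q_0} \le \kappa^{-(d+2)/q_0}(|\omega_w|^{q_0})_{Q_r}^{1/q_0}.
$$

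For $\omega_v$, since $v$ is divergence-free and solves a homogeneous constant-coefficient Stokes system in $Q_r$, applying Lemma~\ref{lem1.3} to the parabolic rescaling $\tilde v(\tilde t,\tilde x) = v(r^2\tilde t,r\tilde x)$ and undoing the scaling gives $[\omega_v]_{C^{1/2,1}(Q_{r/2})} \le Nr^{-1}(|\omega_v|^{q_0})_{Q_r}^{1/q_0}$. Since $\kappa \le 1/2$ we have $Q_{\kappa r}\subset Q_{r/2}$, and any two points of $Q_{\kappa r}$ are within parabolic distance at most $3\kappa r$, so $(|\omega_v - (\omega_v)_{Q_{\kappa r}}|)_{Q_{\kappa r}} \le N\kappa(|\omega_v|^{q_0})_{Q_r}^{1/q_0}$. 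To let the $g$-term appear here (with the favorable factor $\kappa$) rather than in the $\omega_w$ piece, I would bound $|\omega_v| \le N|Dv| \le N(|Du| + |Dw|)$ and invoke inequality~\eqref{eq8.37b} from the proof of Lemma~\ref{lem2.4b}, which controls $(|Dw|^{q_0})_{Q_r}^{1/q_0}$ by $(|f|^{q_0})_{Q_r}^{1/q_0} + \delta(|Du|^{q_1})_{Q_r}^{1/q_1} + (|g|^{q_0})_{Q_r}^{1/q_0}$. Assembling the two pieces and absorbing subordinate terms produces the claimed inequality. The one bookkeeping point that I expect to require the most care is routing the $g$-dependence through the H\"older bound for $\omega_v$ (which carries only the small factor $\kappa$) rather than through the $L_{q_0}$ bound for $\omega_w$ (which would produce the larger $\kappa^{-(d+2)/q_0}$); this is possible precisely because the vorticity of $w$ does not see the divergence source.
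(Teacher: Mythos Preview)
Your proposal is correct and follows essentially the same route as the paper: the same freezing-coefficient decomposition $u=v+w$, the same use of Lemma~\ref{lem1.3} (after rescaling) for the homogeneous piece $\omega_v$, and the same $L_{q_0}$-control of the inhomogeneous piece $\omega_w$ taken from the proof of Lemma~\ref{lem2.4b}.

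Your handling of the $g$-dependence is actually slightly sharper than what the paper writes out. You correctly observe that the curl equation for $w$ does not see the divergence datum $I_{Q_r}g$, so $(|\omega_w|^{q_0})_{Q_r}^{1/q_0}$ can be bounded by $(|f|^{q_0})_{Q_r}^{1/q_0}+\delta(|Du|^{q_1})_{Q_r}^{1/q_1}$ alone. The paper, in its displayed bound \eqref{omega-1.est}, instead routes $\omega_1$ through $|Dw|$ and thereby picks up a $g$-term; carried literally to the end this would attach the large factor $\kappa^{-(d+2)/q_0}$ to $g$ rather than the stated $\kappa$. Your routing of $g$ exclusively through the $\omega_v$-oscillation (via $|\omega_v|\le N|Dv|\le N(|Du|+|Dw|)$ and \eqref{eq8.37b}) is exactly what the stated inequality requires, and it is already implicit in the intermediate bound for $\|\nabla\times w\|_{L_{q_0}}$ inside the proof of Lemma~\ref{lem2.4b}.
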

\begin{proof}
Let $(w, p_1)$ and $(v, p_2)$ be as in the proof of Lemma \ref{lem2.4b}. In particular,  $(w, p_1)$ is a weak solution of
\[
w_t - D_i(\bar{a}_{ij}(t) D_j w) + \nabla p_1 = \Div[I_{Q_{r}} f] + D_i(I_{Q_{r}} (a_{ij} - \bar{a}_{ij}(t)) D_j u), \quad \Div w = I_{Q_r} g
\]
in $(-r^2, 0) \times \bR^{d}$ with zero initial condition on $\{t = -r^2\}$. Also, $(v, p_2) = (u-w, p- p_1)$ is a weak solution of
\[
v_t - D_i(\bar{a}_{ij}(t) D_j v) + \nabla p_2 = 0, \quad \Div v =0
\]
in $Q_{r}$. Let $\omega_1 = \nabla \times w$ and $\omega_2 = \nabla \times v$. Observe that $\omega = \omega_1 + \omega_2$. Moreover, from the definition of $\omega_1$ and \eqref{eq8.37b},
\begin{align}
(|\omega_1|^{q_0})_{Q_{r}}^{1/q_0} & \leq (|D w|^{q_0})_{Q_{r}}^{1/q_0} \nonumber \\
 \label{omega-1.est}
& \leq N(d, \nu, q_0) \Big[ (|f|^{q_0})_{Q_{r}}^{1/q_0} + \delta (|Du|^{q_1})_{Q_{r}}^{1/q_1}\Big] + N(d, q_0) (|g|^{q_0})_{Q_{r}}^{1/{q_0}}.
\end{align}
On the other hand, by applying Lemma \ref{lem1.3} to $\omega_2$ with suitable scaling, we obtain
\begin{align*}
(\omega_2 - (\omega_2)_{Q_{\kappa r}})_{Q_{\kappa r}} & \leq N\kappa r [[\omega_2]]_{C^{1/2, 1}(Q_{r/2})} \leq N(d, \nu, q_0) \kappa (|\omega_2|^{q_0})_{Q_{r}}^{1/q_0} \nonumber \\
& \leq N(d, \nu, q_0) \kappa \Big[ (|\omega|^{q_0})_{Q_{r}}^{1/q_0} + (|\omega_1|^{q_0})_{Q_{r}}^{1/q_0}\Big].
 %\\& \leq N(d, \nu, q_0) \kappa \Big[ (|Du|^{q_0})_{Q_{2r}}^{1/q_0} + (|\omega_1|^{q_0})_{Q_{2r}}^{1/q_0}\Big].
\end{align*}
We then combine the last estimate with \eqref{omega-1.est} and the fact that $\delta \in (0,1)$ to deduce that
\begin{equation} \label{omega-2.est}
\begin{split}
& (\omega_2 - (\omega_2)_{Q_{\kappa r}})_{Q_{\kappa r}} \\
& \leq \kappa  \Big[ N(d, \nu, q_0) (|f|^{q_0})_{Q_{r}}^{1/q_0} + N(d, \nu, q_0) (|Du|^{q_1})_{Q_{r}}^{1/q_1} +  N(d, q_0) (|g|^{q_0})_{Q_{r}}^{1/{q_0}} \Big] .
\end{split}
\end{equation}
Moreover, by using the inequality
\[
\fint_{Q_{\kappa r}}|\omega -(\omega)_{Q_{\kappa r}}| \,dx\, dt \leq 2 \fint_{Q_{\kappa r}}|\omega -c| \,dx\,dt
\]
with $c = (\omega_2)_{Q_{\kappa r}}$, and then applying
the triangle inequality and H\"{o}lder's inequality, we have
\begin{align*}
&\fint_{Q_{\kappa r}}|\omega -(\omega)_{Q_{\kappa r}}| \,dx\, dt   \leq 2 \fint_{Q_{\kappa r}} |\omega - (\omega_2)_{Q_{\kappa r}}| \,dx \, dt\\
& \leq 2  \fint_{Q_{\kappa r}} |\omega_2 - (\omega_2)_{Q_{\kappa r}}| \,dx\, dt  + N(d, q_0) \kappa^{-\frac{d+2}{q_0}}\left(\fint_{Q_{r}} |\omega_1|^{q_0} \,dx\, dt\right)^{1/q_0}.
\end{align*}
This last estimate together with  \eqref{omega-1.est} and \eqref{omega-2.est} gives that
\begin{align*}
(\omega -(\omega)_{Q_{\kappa r}})_{Q_{\kappa r}} & \leq N(d, \nu, q_0)\Big( \kappa^{-\frac{d+2}{q_0}}+\kappa\Big)(|f|^{q_0})_{Q_{r}}^{1/q_0}  + N(d, q_0) \kappa (|g|^{q_0})_{Q_{r}}^{1/{q_0}} \\
& \quad \quad + N(d, \nu, q_0)\Big(\kappa^{-\frac{d+2}{q_0}} \delta + \kappa\Big) (|Du|^{q_1})_{Q_{r}}^{1/q_1},
\end{align*}
which  implies our desired estimate as $\kappa \in (0, 1/2)$.
\end{proof}

Our next lemma gives the key estimates of vorticity $\omega = \nabla \times u$ and $Du$ in the mixed norm.
\begin{lemma}
                            \label{lem4.10}
Let $R\in (0,R_0)$, $\delta \in (0,1)$, $\kappa \in (0,1/2)$, $s, q \in (1, \infty)$,  and
$$
\alpha_0\in (\min(s,q)/(\min(s,q)-1),\infty).
$$
Assume that \eqref{ellipticity}-\eqref{symmetry} hold and Assumption \ref{assump1} ($\delta, \alpha_0$) is satisfied.
Suppose that $(u,p)\in \cH^1_{s,q}(Q_R)^d\times L_{1}(Q_R)$ is a weak solution to \eqref{eq7.41b} in $Q_R$, and $\omega = \nabla \times u$. Then
we have
\begin{align}
&\norm{\omega}_{L_{s,q}(Q_{2R/3})} \leq N \kappa^{-\frac{d+2}{q_0}}\norm{f}_{L_{s,q}(Q_{3R/4})} + N\kappa \norm{g}_{L_{s,q}(Q_{3R/4})} \nonumber\\
                            \label{eq1.39}
&\quad+N\Big(\kappa^{-\frac{d+2}{q_0}} \delta + \kappa\Big) \norm{Du}_{L_{s,q}(Q_{3R/4})}+ N  R^{2/s+d/q}\kappa^{-d-2}(|\omega|)_{Q_{3R/4}},
\end{align}
and
\begin{align}
&\norm{Du}_{L_{s,q}(Q_{R/2})} \leq N \kappa^{-d-2}\norm{f}_{L_{s,q}(Q_{R})} + N \kappa^{-d-2}\norm{g}_{L_{s,q}(Q_{R})} \nonumber\\
                            \label{eq1.40}
&\quad+N\Big(\kappa^{-d-2} \delta + \kappa\Big) \norm{Du}_{L_{s,q}(Q_{R})}+
N \kappa^{-d-2}R^{-1}\norm{u}_{L_{s,q}(Q_{R})}.
\end{align}
\end{lemma}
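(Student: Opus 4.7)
The plan is to establish both estimates by a sharp-function argument in mixed norms. For \eqref{eq1.39}, I would combine the mean-oscillation control of $\omega$ supplied by Lemma \ref{div-vorticity-oss.est} with the Fefferman--Stein type bound of Lemma \ref{mixed-norm-lemma} and the mixed-norm boundedness of the Hardy--Littlewood maximal operator. For \eqref{eq1.40}, I would derive a parallel mean-oscillation estimate for $Du$ itself by revisiting the decomposition $u = v + w$ from the proof of Lemma \ref{lem2.4b}, using the H\"older regularity of $\nabla \times v$ (Lemma \ref{lem1.3}) combined with the identity $\Delta v_i = \sum_{k \ne i} D_k(D_k v_i - D_i v_k)$ (valid since $\Div v = 0$) to transfer regularity from the vorticity of $v$ back to $Dv$, and then feeding this into the same sharp-function machinery.

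To start, I would fix auxiliary exponents $1 < q_0 < q_1 < \min(s,q)$ with $q_0 q_1/(q_1 - q_0) \le \alpha_0$, equivalently $1/q_0 - 1/q_1 \ge 1/\alpha_0$. Such a choice is possible because the hypothesis $\alpha_0 > \min(s,q)/(\min(s,q) - 1)$ rearranges to $1/\alpha_0 + 1/\min(s,q) < 1$, so one may pick $1/q_0$ in the non-empty interval $(1/\min(s,q) + 1/\alpha_0,\, 1)$ and then $1/q_1 \in (1/\min(s,q),\, 1/q_0 - 1/\alpha_0)$. For any $z_0 \in Q_{2R/3}$ and $r > 0$ with $Q_r(z_0) \subset Q_{3R/4}$, Lemma \ref{div-vorticity-oss.est} controls the oscillation $\fint_{Q_{\kappa r}(z_0)}|\omega - (\omega)_{Q_{\kappa r}(z_0)}|$ by appropriate averages of $|f|^{q_0}$, $|g|^{q_0}$ and $|Du|^{q_1}$ on $Q_r(z_0)$. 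For scales $r$ too large to fit inside $Q_{3R/4}$ the oscillation is bounded trivially by $N\kappa^{-(d+2)}(|\omega|)_{Q_{3R/4}}$. These two regimes together yield a pointwise bound on the dyadic sharp function $\omega^{\#}_{\text{dy}}$ in terms of Hardy--Littlewood maximal functions of $|f|^{q_0}, |g|^{q_0}, |Du|^{q_1}$ plus the global average of $\omega$. Applying Lemma \ref{mixed-norm-lemma} to $\omega I_{Q_{2R/3}}$ on the ambient cylinder $Q_{2R/3}$, together with the mixed-norm boundedness of the maximal operator (valid since $q_0, q_1 < \min(s,q)$), then delivers \eqref{eq1.39}; the $L_1$-remainder term in Lemma \ref{mixed-norm-lemma} absorbs the large-scale contribution and generates the prefactor $R^{2/s+d/q}\kappa^{-d-2}$ on $(|\omega|)_{Q_{3R/4}}$.

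For \eqref{eq1.40}, the goal is to establish a mean-oscillation bound for $Du$ structurally analogous to Lemma \ref{div-vorticity-oss.est}. With the decomposition $u = v + w$ of Lemma \ref{lem2.4b}, we already have $(|Dw|^{q_0})_{Q_r}^{1/q_0} \le N[(|f|^{q_0})_{Q_r}^{1/q_0} + \delta(|Du|^q)_{Q_r}^{1/q} + (|g|^{q_0})_{Q_r}^{1/q_0}]$, while $v$ is divergence-free and solves the frozen-coefficient homogeneous Stokes system on $Q_r$. For such $v$, Lemma \ref{lem1.3} gives H\"older regularity of $\omega_v := \nabla \times v$, and the div-free identity $\Delta v_i = \sum_{k \ne i} D_k (\omega_v)_{ki}$ together with standard elliptic interior estimates effectively gains one derivative, yielding a Lipschitz-type bound of the form $\fint_{Q_{\kappa r}(z_0)}|Dv - (Dv)_{Q_{\kappa r}(z_0)}| \le N\kappa \, r^{-1}(|v - [v]_{B_r}|^{q_0})_{Q_r}^{1/q_0}$ after the appropriate scaling. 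Writing $v = u - w$, combining with the triangle inequality and the control of $w$ above, one obtains a pointwise mean-oscillation bound for $Du$ analogous to that for $\omega$, with an additional lower-order term of the form $r^{-1}(|u - [u]_{B_r}|^{q_0})_{Q_r}^{1/q_0}$. Since $r^{-1}(|u - [u]_{B_r}|^{q_0})_{Q_r}^{1/q_0} \le 2 r^{-1}(|u|^{q_0})_{Q_r}^{1/q_0}$, the same sharp-function/maximal-function machinery as for $\omega$ applies, and the $u$-term produces exactly the $\kappa^{-d-2}R^{-1}\|u\|_{L_{s,q}(Q_R)}$ contribution in \eqref{eq1.40}.

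The principal technical obstacle is the Lipschitz-type mean-oscillation estimate for $Dv$: Lemma \ref{lem1.3} directly gives only H\"older regularity of $\omega_v$, and one must carefully use $\Delta v_i = \sum_{k\ne i}D_k(\omega_v)_{ki}$ together with classical div-free elliptic estimates to gain the extra factor of $\kappa$ that makes the sharp-function argument run. Tracking the correct scaling in $r$ throughout, and the nested cylinder sizes $Q_{R/2} \subset Q_{2R/3} \subset Q_{3R/4} \subset Q_R$, is where most of the bookkeeping lies. A secondary subtlety is the tight choice of $q_0, q_1$: both must be strictly less than $\min(s,q)$ for the mixed-norm maximal inequality, while the ratio condition $q_0 q_1/(q_1-q_0) \le \alpha_0$ from Lemma \ref{div-vorticity-oss.est} is precisely what forces the lower bound on $\alpha_0$ in the hypothesis.
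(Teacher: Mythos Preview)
Your argument for \eqref{eq1.39} is correct and coincides with the paper's: mean-oscillation control from Lemma \ref{div-vorticity-oss.est} at small scales, the trivial bound at large scales, a pointwise estimate on $\omega^{\#}_{\text{dy}}$, and then Lemma \ref{mixed-norm-lemma} together with the mixed-norm maximal inequality. Your choice of $q_0,q_1$ is also exactly right.

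For \eqref{eq1.40}, however, there is a genuine gap. The Lipschitz-type mean-oscillation bound you claim for $Dv$,
\[
\fint_{Q_{\kappa r}(z_0)}\bigl|Dv - (Dv)_{Q_{\kappa r}(z_0)}\bigr| \le N\kappa \, r^{-1}\bigl(|v - [v]_{B_r}|^{q_0}\bigr)_{Q_r}^{1/q_0},
\]
is false in general because $Dv$ need not have any regularity in $t$. Lemma \ref{lem1.3} gives $C^{1/2,1}$ control only for $\omega_v=\nabla\times v$, which satisfies a pressure-free parabolic equation; the identity $\Delta v_i=\sum_{k\neq i}D_k(\omega_v)_{ki}$ is purely spatial and transfers nothing across time slices. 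Serrin's example makes this concrete: take $v(t,x)=\nabla h(t,x)$ with $h(t,\cdot)$ harmonic, say $h(t,x)=c(t)(x_1^2-x_2^2)$. Then $\omega_v\equiv 0$, yet $Dv$ is a fixed matrix times $c(t)$, whose oscillation over $Q_{\kappa r}$ does not shrink with $\kappa$ when $c$ varies rapidly, while the right-hand side is of order $\kappa\,(\fint|c|^{q_0})^{1/q_0}$. So a direct sharp-function argument for $Du$ cannot work. (A related issue: the lower-order term $\kappa\, r^{-1}(|u|^{q_0})_{Q_r}^{1/q_0}$ you propose is not dominated by a maximal function of $u$ uniformly in $r$.)

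The paper circumvents this entirely and never runs a second sharp-function argument. Since $\Div u=g$, the elliptic identity of Lemma \ref{lem1.2} applied slice by slice in $t$ gives
\[
\|Du\|_{L_{s,q}(Q_{R/2})}\le N\|\omega\|_{L_{s,q}(Q_{2R/3})}+N\|g\|_{L_{s,q}(Q_{2R/3})}+NR^{-1}\|u\|_{L_{s,q}(Q_{2R/3})},
\]
which requires no time regularity. One then inserts the already-proved estimate \eqref{eq1.39} for $\|\omega\|_{L_{s,q}(Q_{2R/3})}$ and handles the residual term $R^{2/s+d/q}\kappa^{-d-2}(|\omega|)_{Q_{3R/4}}$ via H\"older's inequality and Lemma \ref{lem2.4b} with a covering, obtaining \eqref{eq1.40}. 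The essential point you are missing is that the passage from $\omega$ to $Du$ must go through a time-slice-wise elliptic estimate rather than a parabolic oscillation bound.
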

\begin{proof}
Take $q_1 \in (1, \min(s,q))$ and $q_0\in (1,q_1)$ such that
$\alpha_0\ge q_0q_1/(q_1-q_0)$.
We consider two cases.
\\ \noindent
{\em Case 1: $r \in (0, R/12)$.} It follows from Lemma \ref{div-vorticity-oss.est} that for all $z_0 \in Q_{2R/3}$,
\begin{align*}
(\omega -(\omega)_{Q_{\kappa r}(z_0)})_{Q_{\kappa r}(z_0)} &\leq N(d, \nu, q_0) \kappa^{-\frac{d+2}{q_0}} (|f|^{q_0})^{1/q_0}_{Q_{r}(z_0)} +  N(d, q_0)\kappa (|g|^{q_0})_{Q_{r}}^{1/{q_0}} \\
& \quad + N(d, \nu, q_0)\Big(\kappa^{-\frac{d+2}{q_0}} \delta+ \kappa\Big)(|Du|^{q_1})^{1/q_1}_{Q_{r}(z_0)}.
\end{align*}
Observe that because $r < R/12$, we have $Q_{r}(z_0) \subset Q_{3R/4}$.
Therefore,
\[
\begin{split}
& (|f|^{q_0})^{1/q_0}_{Q_{r}(z_0)} \leq \mathcal{M}(I_{Q_{_{3R/4}}}|f|^{q_0})^{1/q_0}(z_0), \\ %\quad  \text{and} \quad
&  (|g|^{q_0})_{Q_{r}}^{1/{q_0}} \leq  \mathcal{M}(I_{Q_{_{3R/4}}}|g|^{q_0})^{1/q_0}(z_0), \quad \text{and}  \\
& (|Du|^{q_1})^{1/q_1}_{Q_{r}(z_0)}  \le \mathcal{M}(I_{Q_{_{3R/4}}}|Du|^{q_1})^{1/q_1}(z_0),
\end{split}
\]
which imply that
\begin{align*}
(\omega -(\omega)_{Q_{\kappa r}(z_0)})_{Q_{\kappa r}(z_0)}
& \leq N \kappa^{-\frac{d+2}{q_0}} \mathcal{M}(I_{Q_{_{3R/4}}}|f|^{q_0})^{1/q_0}(z_0) + N \kappa \mathcal{M}(I_{Q_{_{3R/4}}}|g|^{q_0})^{1/q_0}(z_0) \\
&\quad + N\Big(\kappa^{-\frac{d+2}{q_0}} \delta+ \kappa\Big)\mathcal{M}(I_{Q_{_{3R/4}}}|Du|^{q_1})^{1/q_1}(z_0)
.
\end{align*}
\noindent
{\em Case 2: $r\in [R/12, R/(12\kappa))$.} In this case, we simply estimate
$$
(\omega -(\omega)_{Q_{\kappa r}(z_0)})_{Q_{\kappa r}(z_0)}
\le 2(|\omega|)_{Q_{\kappa r}(z_0)}
\le N\kappa^{-d-2}(|\omega|)_{Q_{3R/4}}.
$$
%On the other hand, if $r > R/24$, we have $Q_{2r}(z_0) \subset $
%\[
%\begin{split}
%(|f|^{q_0})^{1/q_0}_{Q_{2r}(z_0)} & = \left( \frac{1}{|Q_{2r}(z_0)|} \int_{Q_{2r}(z_0) } |f(t,x)|^{q_0} dxdt\right)^{1/q_0} \\
%&  \leq C(d,q) \left( \frac{1}{|Q_{2r}(\tilde{z}_0) \cap Q_{3R/4}|} \int_{Q_{2r}(\tilde{z}_0) \cap Q_{3R/4}} |f(t,x)|^{q_0} dxdt\right)^{1/q_0}
%\end{split}
%\]
Now we take $\cX=Q_{2R/3}$ and define the dyadic sharp function $\omega^\#_{\text{dy}}$ of $\omega$ in $\cX$. From the above two cases, we conclude that for any $z_0\in \cX$,
\begin{align*}
\omega_{\text{dy}}^{\#}(z_0)   & \leq N(d, \nu, q_0) \kappa^{-\frac{d+2}{q_0}} \mathcal{M}(I_{Q_{_{3R/4}}}|f|^{q_0})^{1/q_0}(z_0) +  N(d, q_0) \kappa \mathcal{M}(I_{Q_{_{3R/4}}}|g|^{q_0})^{1/q_0}(z_0)\\
& \quad  + N(d, \nu, q_0)\Big(\kappa^{-\frac{d+2}{q_0}} \delta+ \kappa\Big) \mathcal{M}(I_{Q_{_{3R/4}}}|Du|^{q_1})^{1/q_1}(z_0) + N\kappa^{-d-2}(|\omega|)_{Q_{3R/4}}.
\end{align*}
Recalling that $1<q_0<q_1<\min(s,q)$, by Lemma \ref{mixed-norm-lemma} and the Hardy-Littlewood maximum function theorem in mixed-norm spaces (see, for instance, \cite[Corollary 2.6]{DK16}),
\begin{align*}
& \norm{\omega}_{L_{s,q}(Q_{2R/3})}  \leq N(d,s,q)\Big[ \norm{\omega_{\text{dy}}^{\#}}_{L_{s,q}
(Q_{2R/3})} + R^{\frac{2}{s} + \frac{d}{q} -d-2} \norm{\omega}_{L_1(Q_{2R/3})} \Big]\\
& \leq  N \kappa^{-\frac{d+2}{q_0}} \norm{\mathcal{M}(I_{Q_{_{3R/4}}}|f|^{q_0})^{1/q_0}}_{L_{s,q}(\bR^{d+1})} + N \kappa \norm{\mathcal{M}(I_{Q_{_{3R/4}}}|g|^{q_0})^{1/q_0}}_{L_{s,q}(\bR^{d+1})} \\
&\quad+N \Big(\kappa^{-\frac{d+2}{q_0}} \delta+ \kappa\Big) \norm{\mathcal{M}(I_{Q_{_{3R/4}}}|Du|^{q_1})^{1/q_1}}_{L_{s,q}(\bR^{d+1})} + NR^{2/s+d/q}\kappa^{-d-2}(|\omega|)_{Q_{3R/4}}\\
&\le N \Big[ \kappa^{-\frac{d+2}{q_0}}\norm{f}_{L_{s,q}(Q_{3R/4})} + \kappa \norm{g}_{L_{s,q}(Q_{3R/4})} \\
& \quad  +\Big(\kappa^{-\frac{d+2}{q_0}} \delta + \kappa\Big) \norm{Du}_{L_{s,q}(Q_{3R/4})}+  R^{2/s+d/q}\kappa^{-d-2}(|\omega|)_{Q_{3R/4}}\Big],
\end{align*}
where $N = N(d, \nu, s, q, q_0, q_1)$. This estimate gives \eqref{eq1.39}.

Next we show \eqref{eq1.40}. Since $\Div u =g$, as in the proof of Lemma \ref{lem1.2}, we have
\begin{equation}
                            \label{eq1.52}
\|Du\|_{L_{s,q}(Q_{R/2})}
\le N\|\omega\|_{L_{s,q}(Q_{2R/3})} +  N \norm{g}_{L_{s,q}(Q_{2R/3})} +NR^{-1}\|u\|_{L_{s,q}(Q_{2R/3})}.
\end{equation}
We also use H\"older's inequality and Lemma \ref{lem2.4b} with a covering argument to estimate the last term in \eqref{eq1.39} by
\begin{align} \nonumber
& R^{2/s+d/q}\kappa^{-d-2}(|\omega|)_{Q_{3R/4}} \\
& \le NR^{2/s+d/q}\kappa^{-d-2}(|Du|^{q_0})^{1/q_0}_{Q_{3R/4}}\nonumber \\
&\le NR^{2/s+d/q}\kappa^{-d-2}\Big((|f|^{q_0})_{Q_{R}}^{1/q_0} + (|g|^{q_0})_{Q_{R}}^{1/q_0}
+R^{-1}(|u|^{q_0})_{Q_{R}}^{1/q_0}+\delta (|Du|^{q_1})_{Q_{R}}^{1/q_1}\Big)\nonumber\\
                    \label{eq2.01}
&\le N\kappa^{-d-2}\Big(\|f\|_{L_{s,q}(Q_{R})} +  \|g\|_{L_{s,q}(Q_{R})}
+R^{-1}\|u\|_{L_{s,q}(Q_{R})}+\delta\|Du\|_{L_{s,q}(Q_{R})}\Big).
\end{align}
Combining \eqref{eq1.52}, \eqref{eq1.39}, and \eqref{eq2.01}, we reach \eqref{eq1.40}. The lemma is proved.
\end{proof}

Now we are ready to give the proof of Theorem \ref{thm2.3}.
\begin{proof}[Proof of Theorem \ref{thm2.3}]
For $k=1,2,\ldots$, we denote $Q^k=(-(1-2^{-k})^2,0)\times B_{1-2^{-k}}$. Let $k_0$ be the smallest positive integer such that $2^{-k_0-1}\le R_0$. For $k\ge k_0$, we apply \eqref{eq1.40} with $R=2^{-k-1}$ and a covering argument to get
\begin{align}
&\norm{Du}_{L_{s,q}(Q^{k})} \leq N \kappa^{-d-2}\norm{f}_{L_{s,q}(Q^{k+1})} + N  \kappa^{-d-2}\|g\|_{L_{s,q}(Q^{k+1})} \nonumber\\
                            \label{eq2.15}
&\quad+N\Big(\kappa^{-d-2} \delta + \kappa\Big) \norm{Du}_{L_{s,q}(Q^{k+1})}+
N \kappa^{-d-2}2^k\norm{u}_{L_{s,q}(Q^{k+1})}.
\end{align}
Note that the constants $N$ above are independent of $k$.
We then take $\kappa$ sufficiently small and then $\delta$ sufficiently small so that $N\Big(\kappa^{-d-2} \delta + \kappa\Big)\le 1/3$. Finally, we multiply both sides of \eqref{eq2.15} by $3^{-k}$ and sum in $k=k_0,k_0+1,\ldots$ to get the desired estimate. The theorem is proved.
\end{proof}

\section{Non-divergence form Stokes system and proof of Theorem \ref{thm2.3b}.} \label{non-div-se}

In this section, we consider the non-divergence form Stokes system and give the proof of Theorem \ref{thm2.3b}. The following lemma is analogous to Lemma \ref{lem2.4b}.

\begin{lemma}   \label{lem2.4c} Let $q_0\in (1,\infty)$, $q\in (q_0,\infty)$, $r\in (0,R_0)$,  $\nu, \delta\in (0,1)$, and $u\in W^{1,2}_q(Q_{r})^d$ be a strong solution to \eqref{eq7.41c} in $Q_r$. Suppose that \eqref{ellipticity} and Assumption \ref{assump1} ($\delta, 1$) hold. Then we have
\begin{align}
(|D^2u|^{q_0})_{Q_{r/2}}^{1/q_0}&\le N(d, q_0) (|Dg|^{q_0})_{Q_{r}}^{1/q_0}  + N(d,\nu,q_0,q) (|f|^{q_0})_{Q_{r}}^{1/q_0}
\nonumber\\
                                    \label{eq9.19}
&\quad +N(d,\nu,q_0,q)\Big[  r^{-1}(|Du-[Du]_{B_r}(t)|^{q_0})_{Q_{r}}^{1/q_0} + \delta^{1/q_0-1/q}(|D^2u|^q)_{Q_{r}}^{1/q}\Big].
\end{align}
\end{lemma}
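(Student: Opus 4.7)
The plan is to mirror the proof of Lemma \ref{lem2.4b}, adapted to the non-divergence setting. I would decompose $u = w + v$, where $(w, p_1)$ is the solution on $(-r^2, 0) \times \bR^d$, with zero initial condition at $t = -r^2$, of the non-divergence Stokes system
\[
w_t - \bar{a}_{ij}(t) D_{ij} w + \nabla p_1 = I_{Q_r}\big[f + (a_{ij} - \bar a_{ij}) D_{ij} u\big], \quad \Div w = I_{Q_r} g,
\]
with $\bar a_{ij}(t)$ being the frozen coefficient from Assumption \ref{assump1}. Then $(v, p_2) = (u - w, p - p_1)$ satisfies the homogeneous system with coefficients $\bar a_{ij}(t)$ on $Q_r$.

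For the $w$-piece I would invoke the $L_{q_0}$-solvability theory for non-divergence Stokes systems with time-only coefficients (to be proved or cited; a clean route is to first remove the divergence by solving $\Delta \phi = I_{Q_r} g$ in $\bR^d$, so that $w - \nabla \phi$ solves a divergence-free Stokes system whose RHS is controlled by $\|f\|_{L_{q_0}} + \|(a-\bar a)D^2 u\|_{L_{q_0}} + \|Dg\|_{L_{q_0}}$, and then use Calder\'on–Zygmund bounds for $\nabla\phi$) to obtain
\[
\|D^2 w\|_{L_{q_0}((-r^2,0) \times \bR^d)} \leq N\big(\|f\|_{L_{q_0}(Q_r)} + \|(a - \bar a) D^2 u\|_{L_{q_0}(Q_r)} + \|Dg\|_{L_{q_0}(Q_r)}\big).
\]
The coefficient-error term is handled by H\"older's inequality with exponents $q/q_0$ and $\beta = q q_0 /(q - q_0)$, using the pointwise bound $|a_{ij} - \bar a_{ij}| \leq 2\nu^{-1}$ (valid since $d_{ij} \equiv 0$) and Assumption \ref{assump1} $(\delta, 1)$ to get
\[
\Big(\fint_{Q_r} |a - \bar a|^\beta\Big)^{1/\beta} \leq N(\nu, q_0, q)\, \delta^{1/\beta} = N\, \delta^{1/q_0 - 1/q},
\]
and hence $(|(a-\bar a)D^2 u|^{q_0})_{Q_r}^{1/q_0} \leq N\delta^{1/q_0 - 1/q} (|D^2 u|^q)_{Q_r}^{1/q}$.

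For the $v$-piece, because $\bar a_{ij}$ depends only on $t$, differentiating the homogeneous equation in $x_k$ shows that each $D_k v$ is again a weak solution of the same homogeneous Stokes system (with pressure $D_k p_2$); moreover, under $d_{ij} = 0$ and coefficients independent of $x$, divergence and non-divergence forms coincide, so Lemma \ref{lem1.2} is directly applicable to $D_k v$. After rescaling from $Q_1$ to $Q_r$ this yields
\[
(|D^2 v|^{q_0})_{Q_{r/2}}^{1/q_0} \leq N r^{-1} (|Dv - [Dv]_{B_r}(t)|^{q_0})_{Q_r}^{1/q_0}.
\]

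To finish, I write $|D^2 u| \leq |D^2 v| + |D^2 w|$ and $Dv - [Dv]_{B_r}(t) = (Du - [Du]_{B_r}(t)) - (Dw - [Dw]_{B_r}(t))$; a Poincar\'e inequality controls $(|Dw - [Dw]_{B_r}(t)|^{q_0})_{Q_r}^{1/q_0}$ by $N r (|D^2 w|^{q_0})_{Q_r}^{1/q_0}$, so that the stray $Dw$ terms merge into the bound for $D^2 w$ and produce \eqref{eq9.19}. The main technical hurdle is the first step: the $L_{q_0}$-estimate for the non-divergence Stokes system must depend only on $\|Dg\|_{L_{q_0}}$, not on $\|g_t\|_{L_{q_0}}$ (which is not available in the perturbative framework here); the reduction via $\nabla\phi$ with $\Delta\phi = I_{Q_r}g$ described above is the cleanest way I see to secure precisely this form of the estimate.
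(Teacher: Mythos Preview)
Your overall architecture matches the paper's: decompose $u=w+v$ with $(w,p_1)$ solving a frozen-coefficient Stokes system on $(-r^2,0)\times\bR^d$ absorbing $f$ and the coefficient error $(a-\bar a)D^2u$, estimate $D^2v$ via Lemma~\ref{lem1.2} applied to $Dv$, and finish with the triangle and Poincar\'e inequalities. Your treatment of the coefficient error via H\"older and the bound $|a-\bar a|\le 2\nu^{-1}$ is exactly right.

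The gap is in the $w$-estimate. With your choice $\Div w=I_{Q_r}g$, the hard spatial cutoff produces a jump across $\partial B_r$ that your $\nabla\phi$ reduction does not remove. You are correct that, writing $\Delta\phi=I_{Q_r}g$, the terms $(\nabla\phi)_t$ and $\bar a_{ij}D_{ij}\nabla\phi$ are gradients and can be absorbed into the pressure, so $w-\nabla\phi$ solves a divergence-free Stokes system with right-hand side $I_{Q_r}[f+(a-\bar a)D^2u]$ and one obtains $\|D^2(w-\nabla\phi)\|_{L_{q_0}}$ without any $g$-contribution. But to recover $\|D^2 w\|_{L_{q_0}(Q_r)}$ you then need $\|D^2\nabla\phi\|_{L_{q_0}(Q_r)}$, and since $\Delta(D_k\phi)=D_k(I_{Q_r}g)$ carries a surface measure on $\partial B_r$, Calder\'on--Zygmund does not give this in terms of $\|Dg\|_{L_{q_0}(Q_r)}$ alone; at best an interior estimate on a strictly smaller ball introduces an extra $r^{-1}\|g\|_{L_{q_0}(Q_r)}$ that cannot be reduced to $\|Dg\|$ without mean-zero information on $g$. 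The paper avoids this by taking instead $\Div w=\phi_r\bigl(g-[g]_{B_r}(t)\bigr)$ with a smooth cutoff $\phi_r\in C_0^\infty$ equal to $1$ on $Q_{2r/3}$: the product rule and Poincar\'e give $\|D(\phi_r(g-[g]_{B_r}))\|_{L_{q_0}}\le N\|Dg\|_{L_{q_0}(Q_r)}$, and then $\|D^2w\|_{L_{q_0}}\le N\bigl(\|D(\nabla\times w)\|_{L_{q_0}}+\|Dg\|_{L_{q_0}(Q_r)}\bigr)$, with $\|D(\nabla\times w)\|_{L_{q_0}}$ bounded via the $\cH^1_{q_0}$-estimate for the vorticity equation. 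The price is that $\Div v=[g]_{B_r}(t)$ on $Q_{2r/3}$, but since this is $x$-independent, $D_kv$ is still divergence-free and Lemma~\ref{lem1.2} applies to it exactly as you intended.
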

\begin{proof}
The proof is similar to that of Lemma \ref{lem2.4b}.
Let $(w,p_1)$ be a strong solution to
$$
w_t-\bar a_{ij}(t)D_{ij} w+\nabla p_1=I_{Q_r}(f+(a_{ij}-\bar a_{ij})D_{ij} u)\,\quad \Div w=  \phi_r (g - [g]_{B_{r}}(t))
$$
in $(-r^2,0)\times \bR^d$ with zero initial condition on $\{t=-r^2\}$, where $\phi_r \in C_0^\infty((-r^2,r^2)\times B_{r})$ is a standard non-negative cut-off function, which satisfies $\phi_r = 1$ on $Q_{2r/3}$ and $|D\phi_r|\le 4/r$. Observe that from the equation $\Div w = \phi_{r} (g - [g]_{B_{r}}(t))$ and the Poincar\'e inequality, we have
\begin{equation} \label{non-est.w}
\begin{split}
& \|D^2w\|_{L_{q_0}((-r^2,0)\times \bR^d)} \\
& \leq N(d,q_0)\Big[ \|D\omega\|_{L_{q_0}((-r^2,0)\times \bR^d)} + \|D (\phi_{r} (g - [g]_{B_r}))\|_{L_{q_0}((-r^2,0)\times \bR^d)} \Big] \\
& \leq N(d,q_0)\Big[ \|D\omega\|_{L_{q_0}((-r^2,0)\times \bR^d)} + \|D  g\|_{L_{q_0}(Q_{r})} \Big].
\end{split}
\end{equation}
Now, $\omega:=\nabla\times w$ is a weak solution to the divergence form parabolic equation
$$
\omega_t-\bar a_{ij}(t)D_{ij} \omega =\nabla\times \big(I_{Q_r}(f+(a_{ij}-\bar a_{ij})D_{ij} u)\big).
$$
By applying the $\cH^1_p$ estimate for divergence form parabolic equations and \eqref{non-est.w}, we obtain
\begin{align*}
&\|D^2w\|_{L_{q_0}((-r^2,0)\times \bR^d)}\\
&\le N(d,\nu,q_0)\Big[ \|f\|_{L_{q_0}(Q_r)} + \|(a_{ij}-\bar a_{ij})D_{ij} u\|_{L_{q_0}(Q_r)} \Big]+  N(d, q_0)  \|Dg\|_{L_{q_0}(Q_{r})}.
\end{align*}
From this and by using  Assumption \ref{assump1} and H\"{o}lder's inequality for the middle term on the right hand side of the last estimate, we have
\begin{align}\nonumber
(|D^2w|^{q_0})_{Q_{r}}^{1/{q_0}}
%& \le  N(d, q_0)\Big[ \textcolor{red}{(|Dg|^{q_0})_{Q_{2r}}^{1/{q_0}} + r^{-1} (|g|^{q_0})_{Q_{2r}}^{1/{q_0}}} \Big] \\ \nonumber
%& \quad  + N(d, \nu, q_0) \Big[ (|f|^{q_0})_{Q_{r}}^{1/{q_0}}  +(|(a_{ij}-\bar a_{ij})D_{ij} u|^{q_0})_{Q_{r}}^{1/{q_0}} \Big] \\  \nonumber
& \leq N(d, q_0) (|Dg|^{q_0})_{Q_{r}}^{1/{q_0}}  \\  \label{eq8.37c}
&\quad + N(d,\nu,q_0)\Big[ (|f|^{q_0})_{Q_{r}}^{1/{q_0}}
+ \delta^{1/q_0-1/q}(|D^2u|^q)_{Q_{r}}^{1/q}\Big].
\end{align}
%where we used Assumption \ref{assump1} and H\"older's inequality in the last inequality.
Now $(v,p_2):=(u-w,p-p_1)$ satisfies
$$
v_t-\bar a_{ij}(t)D_{ij} v+\nabla p_2=0,\quad \Div v= [g]_{B_{r}}(t)
$$
in $Q_{2r/3}$. By Lemma \ref{lem1.2} applied to $Dv$ with a scaling, we have
\begin{equation}
                            \label{eq8.43c}
(|D^2v|^{q_0})_{Q_{r/2}}^{1/{q_0}}\le r^{-1}(|Dv-[Dv]_{B_{r}}(t)|^{q_0})_{Q_{r}}^{1/{q_0}}.
\end{equation}
By \eqref{eq8.37c}, \eqref{eq8.43c}, the triangle inequality, and the Poincar\'e inequality, we get the desired inequality.
\end{proof}
\begin{remark}
By interpolation inequalities and iteration, we can replace the term $r^{-1}(|Du-[Du]_{B_r}(t)|^{q_0})_{Q_{r}}^{1/q_0}$ in \eqref{eq9.19} by $r^{-2}(|u-[u]_{B_r}(t)|^{q_0})_{Q_{r}}^{1/q_0}$.
\end{remark}

In the next lemma we prove a mean oscillation estimate of $D\omega$.

\begin{lemma}
            \label{non-vorticity-oss.est}
Let $q_1 \in (1, \infty), q_0 \in (1, q_1)$, $\delta \in (0,1)$, $R_0\in (0,1/4)$, $r \in (0, R_0)$, $\kappa \in (0, 1/4)$. Suppose that  \eqref{ellipticity} and Assumption \ref{assump1} ($\delta, 1$) hold. Suppose that $u\in W^{1,2}_{q_1}(Q_{r})^d$ is a strong solution to \eqref{eq7.41c} in $Q_{r}$. Then it holds that
\begin{align*}
(D\omega -(D\omega)_{Q_{\kappa r}})_{Q_{\kappa r}} & \leq N(d, \nu, q_0) \kappa^{-\frac{n+2}{q_0}}(|f|^{q_0})_{Q_{r}}^{1/q_0}  + N(d, q_0) \kappa (|Dg|^{q_0})_{Q_{r}}^{1/q_0} \\
& \quad \quad + N(n, \nu, q_0, q_1)\Big(\kappa^{-\frac{n+2}{q_0}} \delta^{1/q_0-1/q_1} + \kappa\Big) (|D^2u|^{q_1})_{Q_{r}}^{1/{q_1}},
\end{align*}
where $\omega = \nabla \times u$.
\end{lemma}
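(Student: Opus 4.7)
My plan is to mirror the structure of Lemma \ref{div-vorticity-oss.est}, but with two key adaptations: the splitting uses the function $w$ constructed in the proof of Lemma \ref{lem2.4c} (accounting for the nonconstant divergence), and the H\"older component is applied one derivative higher, to $D\omega_2$ rather than $\omega_2$. First I would take $(w,p_1)$ and $(v,p_2)=(u-w,p-p_1)$ exactly as in the proof of Lemma \ref{lem2.4c}, so that $(w,p_1)$ solves the inhomogeneous frozen-coefficient problem with forcing $I_{Q_r}(f+(a_{ij}-\bar a_{ij})D_{ij}u)$ and divergence $\phi_r(g-[g]_{B_r}(t))$, while $(v,p_2)$ solves the homogeneous Stokes system with frozen coefficients and $\Div v=[g]_{B_r}(t)$ (spatially constant) on $Q_{2r/3}$. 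Set $\omega_1=\nabla\times w$, $\omega_2=\nabla\times v$, so $\omega=\omega_1+\omega_2$.

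For the rough piece, since $|D\omega_1|\le |D^2w|$, I would invoke \eqref{eq8.37c} with $q_1$ in place of $q$ to obtain
\begin{equation*}
(|D\omega_1|^{q_0})_{Q_r}^{1/q_0}
\le N(d,q_0)(|Dg|^{q_0})_{Q_r}^{1/q_0}
+ N(d,\nu,q_0,q_1)\bigl[(|f|^{q_0})_{Q_r}^{1/q_0}+\delta^{1/q_0-1/q_1}(|D^2u|^{q_1})_{Q_r}^{1/q_1}\bigr].
\end{equation*}
For the smooth piece, the crucial point is that because $\Div v$ depends only on $t$, each spatial derivative $D_k v$ is divergence free and itself satisfies the same homogeneous Stokes system with frozen coefficients, with pressure $D_kp_2$. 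Therefore I can apply Lemma \ref{lem1.3} (whose proof is simply mollification in $x$ plus interior parabolic estimates, hence iterable) to $Dv$ and then use a scaling argument to conclude
\begin{equation*}
[D\omega_2]_{C^{1/2,1}(Q_{r/2})}\le N(d,\nu,q_0)\,r^{-1}(|D\omega_2|^{q_0})_{Q_r}^{1/q_0},
\end{equation*}
which yields $(|D\omega_2-(D\omega_2)_{Q_{\kappa r}}|)_{Q_{\kappa r}}\le N\kappa(|D\omega_2|^{q_0})_{Q_r}^{1/q_0}$.

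The assembly is then routine. Using the triangle inequality on $Dv=Du-Dw$, H\"older's inequality $(|D^2u|^{q_0})_{Q_r}^{1/q_0}\le (|D^2u|^{q_1})_{Q_r}^{1/q_1}$, and the bound on $D\omega_1$ above (with $\delta\in(0,1)$ absorbed), one obtains $(|D\omega_2|^{q_0})_{Q_r}^{1/q_0}$ dominated by the right-hand side of the lemma without the $\kappa$-factors. Then, choosing the constant $c=(D\omega_2)_{Q_{\kappa r}}$ in the optimization $\fint_{Q_{\kappa r}}|D\omega-(D\omega)_{Q_{\kappa r}}|\le 2\fint_{Q_{\kappa r}}|D\omega-c|$, and splitting $D\omega-c=(D\omega_2-(D\omega_2)_{Q_{\kappa r}})+D\omega_1$, gives
\begin{equation*}
(|D\omega-(D\omega)_{Q_{\kappa r}}|)_{Q_{\kappa r}}\le 2(|D\omega_2-(D\omega_2)_{Q_{\kappa r}}|)_{Q_{\kappa r}}+N\kappa^{-(d+2)/q_0}(|D\omega_1|^{q_0})_{Q_r}^{1/q_0},
\end{equation*}
into which the previous two estimates can be plugged to yield the stated inequality.

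The main obstacle is Step 3, namely upgrading Lemma \ref{lem1.3} to a H\"older bound on $D\omega_2$ rather than $\omega_2$. The idea that $D_kv$ inherits the homogeneous system (because $\Div v$ is spatially constant) is the key observation that makes the non-divergence analogue go through; once this is in hand, the rest of the proof proceeds in the same schematic fashion as the divergence case, with the additional bookkeeping of the $\delta^{1/q_0-1/q_1}$ factor inherited from \eqref{eq8.37c}.
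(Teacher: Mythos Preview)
Your proposal is correct and follows essentially the same route as the paper: the same splitting $(w,p_1)$, $(v,p_2)$ from Lemma~\ref{lem2.4c}, the bound \eqref{eq8.37c} for $D\omega_1$, a H\"older estimate for $D\omega_2$ via Lemma~\ref{lem1.3} applied one derivative higher, and the same assembly with $c=(D\omega_2)_{Q_{\kappa r}}$. The only slip is in the radii: since $(v,p_2)$ solves the homogeneous system only in $Q_{2r/3}$ (because of the cutoff $\phi_r$), the H\"older bound should read $[[D\omega_2]]_{C^{1/2,1}(Q_{r/3})}\le N r^{-1}(|D\omega_2|^{q_0})_{Q_{2r/3}}^{1/q_0}$, which is harmless since $\kappa<1/4$ and the $Q_{2r/3}$-average is controlled by the $Q_r$-average up to a dimensional constant.
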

\begin{proof}
The proof is similar to that of Lemma \ref{div-vorticity-oss.est}. Let $(w, p_1)$ and $(v, p_2)$ be as in the proof of Lemma \ref{lem2.4c}.  In particular,  $(w, p_1)$ is a strong solution of
\[
w_t - \bar{a}_{ij}(t) D_{ij} w + \nabla p_1 = I_{Q_{r}} [f + (a_{ij} - \bar{a}_{ij}(t)) D_{ij} u], \quad \Div w = \phi_{r} (g - [g]_{B_{r}}(t))
\]
in $(-r^2, 0) \times \bR^d$ with zero initial condition on $\{t = -r^2\}$. Moreover, $(v, p_2) = (u-w, p- p_1)$ is a strong solution of
\[
v_t - \bar{a}_{ij}(t) D_{ij} v + \nabla p_2 = 0, \quad \Div v = [g]_{B_{r}}(t)
\]
in $Q_{2r/3}$. Let $\omega_1 = \nabla \times w$ and $\omega_2 = \nabla \times v$ and we see that $\omega = \omega_1 + \omega_2$. Moreover, we can deduce from \eqref{eq8.37c} that
\begin{align}
(|D\omega_1|^{q_0})_{Q_{r}}^{1/q_0}  \leq (|D^2 w|^{q_0})_{Q_{r}}^{1/q_0} & \leq  N(d, q_0) (|Dg|^{q_0})_{Q_{r}}^{1/q_0}  \nonumber \\ \label{non-omega-1.est}
& \quad + N(d, \nu, q_0)\Big[  (|f|^{q_0})_{Q_{r}}^{1/q_0} + \delta^{1/q_0-1/q}(|D^2u|^{q_1})_{Q_{r}}^{1/q_1} \Big].
\end{align}
Also, by applying Lemma \ref{lem1.3} to $D\omega_2$ with a suitable scaling, we obtain
\begin{align*}
(D\omega_2 - (D\omega_2)_{Q_{\kappa r}})_{Q_{\kappa r}} & \leq N\kappa r [[D\omega_2]]_{C^{1/2, 1}(Q_{r/3})} \leq N(d, \nu, q_0) \kappa (|D\omega_2|^{q_0})_{Q_{2r/3}}^{1/q_0}  \\
& \leq  N(d, \nu, q_0) \kappa \Big[ (|D\omega|^{q_0})_{Q_{r}}^{1/q_0} + (|D\omega_1|^{q_0})_{Q_{r}}^{1/q_0}\Big].
%& \leq N(d, \nu, q_0) \kappa \Big[ (|Du|^{q_0})_{Q_{r}}^{1/q_0} + (|\omega_1|^{q_0})_{Q_{r}}^{1/q_0}\Big].
\end{align*}
Then, by combining this last estimate with \eqref{non-omega-1.est} and the fact that $\delta \in (0,1)$, we infer that
\begin{align}\nonumber
(D\omega_2 - (D\omega_2)_{Q_{\kappa r}})_{Q_{\kappa r}}& \leq N(d, q_0)\kappa (|Dg|^{q_0})_{Q_{r}}^{1/q_0}   \\  \label{non-omega-2.est}
& \quad +   N(d, \nu, q_0)\kappa  \Big[  (|f|^{q_0})_{Q_{r}}^{1/q_0} + (|D^2u|^{q_1})_{Q_{r}}^{1/q_1}\Big].
\end{align}
Now, by using the inequality
\[
\fint_{Q_{\kappa r}}|D\omega -(D\omega)_{Q_{\kappa r}}| \,dx\, dt \leq 2 \fint_{Q_{\kappa r}}|D\omega -c| \,dx\,dt
\]
with $c = (D\omega_2)_{Q_{\kappa r}}$, and then applying
the triangle inequality, and H\"{o}lder's inequality, we have
\begin{align*}
&\fint_{Q_{\kappa r}}|D\omega -(D\omega)_{Q_{\kappa r}}| \,dx\, dt   \leq 2 \fint_{Q_{\kappa r}} |D\omega - (D\omega_2)_{Q_{\kappa r}}| \,dx\, dt\\
& \leq 2  \fint_{Q_{\kappa r}} |D\omega_2 - (D\omega_2)_{Q_{\kappa r}}| \,dx\, dt  + N(d, q_0) \kappa^{-\frac{d+2}{q_0}}\left(\fint_{Q_{r}} |D\omega_1|^{q_0} \,dx\, dt\right)^{1/q_0}.
\end{align*}
This last estimate together with  \eqref{non-omega-1.est} and \eqref{non-omega-2.est} imply that
\begin{align*}
(D\omega -(D\omega)_{Q_{\kappa r}})_{Q_{\kappa r}} & \leq N(d, \nu, q_0) \kappa^{-\frac{d+2}{q_0}}(|f|^{q_0})_{Q_{r}}^{1/q_0} +   N(d, q_0)\kappa (|Dg|^{q_0})_{Q_{r}}^{1/q_0}   \\
& \quad \quad + N(d, \nu, q_0, q_1)\Big(\kappa^{-\frac{d+2}{q_0}} \delta^{1/q_0-1/q}+ \kappa\Big) (|D^2u|^{q_1})_{Q_{r}}^{1/q_1}.
\end{align*}
The proof is then complete.
\end{proof}

Our next lemma give the key estimates of $D\omega $ and $D^2u$ in the mixed norm.
\begin{lemma}
Let $R\in (0,R_0)$, $\delta \in (0,1)$, $\kappa \in (0, 1/4)$, $s, q \in (1, \infty)$,  $q_1 \in (1,\min\{s,q\})$, and $q_0 \in (1, q_1)$.
Assume that \eqref{ellipticity} and Assumption \ref{assump1} ($\delta, 1$) hold.
Suppose that $u\in W^{1,2}_{s,q}(Q_R)^d$ is a strong solution to \eqref{eq7.41c} in $Q_R$, and $\omega = \nabla\times u$. Then
we have
\begin{align}
&\norm{D\omega}_{L_{s,q}(Q_{2R/3})} \leq N \kappa^{-\frac{d+2}{q_0}}\norm{f}_{L_{s,q}(Q_{3R/4})} + N\kappa \norm{Dg}_{L_{s,q}(Q_{3R/4})}  \nonumber\\
                            \label{eq1.39n}
&\quad+N\Big(\kappa^{-\frac{d+2}{q_0}} \delta^{1/q_0-1/q_1} + \kappa\Big) \norm{D^2u}_{L_{s,q}(Q_{3R/4})}+ N R^{2/s+d/q}\kappa^{-d-2}(|D\omega|)_{Q_{3R/4}}
\end{align}
and
\begin{align}
&\norm{D^2u}_{L_{s,q}(Q_{R/2})} \leq N \kappa^{-d-2}\norm{f}_{L_{s,q}(Q_{R})} +N \kappa^{-d-2} \norm{Dg}_{L_{s,q}(Q_{3R/4})}   \nonumber\\
                            \label{eq1.40n}
&\quad+N\Big(\kappa^{-d-2} \delta^{1/q_0-1/q_1} + \kappa\Big) \norm{D^2u}_{L_{s,q}(Q_{R})}+
N \kappa^{-d-2} R^{-1}\norm{Du}_{L_{s,q}(Q_{R})}.
\end{align}
\end{lemma}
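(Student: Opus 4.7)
The plan is to mirror the proof of Lemma \ref{lem4.10} from the divergence-form case, substituting Lemma \ref{non-vorticity-oss.est} for Lemma \ref{div-vorticity-oss.est} and Lemma \ref{lem2.4c} for Lemma \ref{lem2.4b}. First I would fix $z_0\in Q_{2R/3}$ and split the scales $r$ into two ranges. For $r\in (0,R/12)$, so that $Q_r(z_0)\subset Q_{3R/4}$, Lemma \ref{non-vorticity-oss.est} controls $(D\omega-(D\omega)_{Q_{\kappa r}(z_0)})_{Q_{\kappa r}(z_0)}$ by averages of $|f|^{q_0}$, $|Dg|^{q_0}$, and $|D^2u|^{q_1}$ on $Q_r(z_0)$, each of which is dominated pointwise at $z_0$ by the strong Hardy--Littlewood maximal function of the corresponding indicator-multiplied power. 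For $r\in [R/12, R/(12\kappa))$, the mean oscillation is bounded trivially by $2(|D\omega|)_{Q_{\kappa r}(z_0)}\le N\kappa^{-d-2}(|D\omega|)_{Q_{3R/4}}$.

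Next, I would take the dyadic sharp function $(D\omega)^{\#}_{\textup{dy}}$ on $\mathcal{X}=Q_{2R/3}$ built from a filtration of parabolic dyadic partitions as in \cite{DK16}. The two cases above give a pointwise bound of $(D\omega)^{\#}_{\textup{dy}}(z_0)$ by a sum of maximal functions plus the constant $\kappa^{-d-2}(|D\omega|)_{Q_{3R/4}}$. Applying Lemma \ref{mixed-norm-lemma} on $Q_{2R/3}$ together with the Hardy--Littlewood maximal function theorem in mixed $L_{s,q}$-norms (legitimate because $q_0<q_1<\min(s,q)$) converts this pointwise estimate into an $L_{s,q}$-bound. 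The $L_1$-correction term in Lemma \ref{mixed-norm-lemma} and the large-scale constant both get absorbed into the final term $NR^{2/s+d/q}\kappa^{-d-2}(|D\omega|)_{Q_{3R/4}}$, which yields \eqref{eq1.39n}.

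To deduce \eqref{eq1.40n} from \eqref{eq1.39n}, I would use the identity $\Delta u_i = D_i g + \sum_{k\ne i}D_k(D_k u_i-D_i u_k)$, which follows from $\Div u=g$, and apply the localized $W^1_{s,q}$-estimate for the Laplacian to $Du$ exactly as in the proof of Lemma \ref{lem1.2}, obtaining
\[
\|D^2u\|_{L_{s,q}(Q_{R/2})}\le N\|D\omega\|_{L_{s,q}(Q_{2R/3})}+N\|Dg\|_{L_{s,q}(Q_{2R/3})}+NR^{-1}\|Du\|_{L_{s,q}(Q_{2R/3})}.
\]
Substituting \eqref{eq1.39n} into this, there remains the residual average $R^{2/s+d/q}\kappa^{-d-2}(|D\omega|)_{Q_{3R/4}}$. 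To tame it, I would use H\"older's inequality together with Lemma \ref{lem2.4c} and a covering argument, bounding this by
\[
N\kappa^{-d-2}\bigl(\|f\|_{L_{s,q}(Q_R)}+\|Dg\|_{L_{s,q}(Q_R)}+R^{-1}\|Du\|_{L_{s,q}(Q_R)}+\delta^{1/q_0-1/q_1}\|D^2u\|_{L_{s,q}(Q_R)}\bigr),
\]
and then collect terms to arrive at \eqref{eq1.40n}.

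The main bookkeeping subtlety is that, in contrast to the divergence case of Lemma \ref{lem4.10}, the coefficient perturbation now appears paired with $D^2u$ rather than $Du$, while Assumption \ref{assump1} is only imposed with exponent $\alpha_0=1$. Consequently, the H\"older step inside Lemma \ref{lem2.4c} delivers the weaker factor $\delta^{1/q_0-1/q_1}$ in place of $\delta$, and the exponents must be arranged $q_0<q_1<\min(s,q)$ \emph{before} the perturbation argument so that $1/q_0-1/q_1>0$ and the factor is genuinely small. Carrying this $\delta^{1/q_0-1/q_1}$ intact through both the sharp-function/maximal estimate and the covering step, without losing any positive power, is the main technical obstacle; otherwise the argument is structurally identical to Lemma \ref{lem4.10}.
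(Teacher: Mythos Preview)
Your proposal is correct and follows essentially the same approach as the paper's own proof: the two-case splitting of scales, the pointwise sharp-function bound via Lemma \ref{non-vorticity-oss.est} and maximal functions, Lemma \ref{mixed-norm-lemma} to pass to $L_{s,q}$, then the Laplacian identity from $\Div u=g$ combined with Lemma \ref{lem2.4c} and a covering argument to derive \eqref{eq1.40n} from \eqref{eq1.39n}. Your remark on tracking the factor $\delta^{1/q_0-1/q_1}$ correctly identifies the only point of departure from the divergence-form argument.
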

\begin{proof} As in the proof of Lemma \ref{lem4.10}, we discuss two cases.
\\ \noindent
{\em Case 1: $r \in (0, R/12)$.} It follows from Lemma \ref{non-vorticity-oss.est} that for all $z_0 \in Q_{2R/3}$,
\begin{align*}
&(D\omega -(D\omega)_{Q_{\kappa r}(z_0)})_{Q_{\kappa r}(z_0)} \leq
N(d, \nu, q_0) \kappa^{-\frac{d+2}{q_0}} (|f|^{q_0})^{1/q_0}_{Q_{r}(z_0)} \\
& + N(d, q_0) \kappa (|Dg|^{q_0})^{1/q_0}_{Q_{r}(z_0)}
+ N(d, \nu, q_0, q_1)\Big(\kappa^{-\frac{d+2}{q_0}} \delta^{1/q_0-1/q_1}+ \kappa\Big)(|D^2u|^{q_1})^{1/q_1}_{Q_{r}(z_0)}.
\end{align*}
Observe that because $r < R/12$, we have $Q_{r}(z_0) \subset Q_{3R/4}$. Therefore,
\[
\begin{split}
&  (|Dg|^{q_0})^{1/q_0}_{Q_{r}(z_0)} \leq \mathcal{M}(I_{Q_{_{3R/4}}}|Dg|^{q_0})^{1/q_0}(z_0), \\
& (|f|^{q_0})^{1/q_0}_{Q_{r}(z_0)} \leq \mathcal{M}(I_{Q_{_{3R/4}}}|f|^{q_0})^{1/q_0}(z_0),  \quad  \text{and} \\
& (|D^2u|^{q_1})^{1/q_1}_{Q_{r}(z_0)}  \le \mathcal{M}(I_{Q_{_{3R/4}}}|D^2u|^{q_1})^{1/q_1}(z_0),
\end{split}
\]
where $\mathcal{M}$ is the Hardy-Littlewood maximal function. These estimates imply that
\begin{align*}
&(D\omega -(D\omega)_{Q_{\kappa r}(z_0)})_{Q_{\kappa r}(z_0)}
 \leq  N \kappa^{-\frac{d+2}{q_0}} \mathcal{M}(I_{Q_{_{3R/4}}}|f|^{q_0})^{1/q_0}(z_0)\\
&\quad\quad +N(d, q_0) \kappa \mathcal{M}(I_{Q_{_{3R/4}}}|Dg|^{q_0})^{1/q_0}(z_0)
+ N\Big(\kappa^{-\frac{d+2}{q_0}} \delta^{1/q_0-1/q_1}+ \kappa\Big)\mathcal{M}(I_{Q_{_{3R/4}}}|D^2u|^{q_1})^{1/q_1}(z_0).
\end{align*}
\noindent
{\em Case 2: $r\in [R/12, R/(12\kappa))$.} In this case, we simply estimate
$$
(D\omega -(D\omega)_{Q_{\kappa r}(z_0)})_{Q_{\kappa r}(z_0)}
\le 2(|D\omega|)_{Q_{\kappa r}(z_0)}
\le N\kappa^{-d-2}(|D\omega|)_{Q_{3R/4}}.
$$
Now, we take $\cX=Q_{2R/3}$ and define the dyadic sharp function $(D\omega)^\#_{\text{dy}}$ of $D\omega$ in $\cX$. From the above two cases, we conclude that for any $z_0\in \cX$,
\begin{align*}
&(D\omega)_{\text{dy}}^{\#}(z_0)    \leq N(d, \nu, q_0) \kappa^{-\frac{d+2}{q_0}} \mathcal{M}(I_{Q_{_{3R/4}}}|f|^{q_0})^{1/q_0}(z_0) +  N(d, q_0) \kappa \mathcal{M}(I_{Q_{_{3R/4}}}|Dg|^{q_0})^{1/q_0}(z_0) \\
& \quad  + N(d, \nu, q_0, q_1)\Big(\kappa^{-\frac{d+2}{q_0}} \delta^{ 1/q_0-1/q_1}+ \kappa\Big) \mathcal{M}(I_{Q_{_{3R/4}}}|D^2u|^{q_1})^{1/q_1} (z_0) + N\kappa^{-d-2}(|D\omega|)_{Q_{3R/4}}.
\end{align*}
Recalling that $1<q_0<q_1<\min\{s,q\}$, by Lemma \ref{mixed-norm-lemma} and the Hardy-Littlewood maximum function theorem in mixed-norm spaces (see, for instance, \cite[Corollary 2.6]{DK16}),
\begin{align*}
&\norm{D\omega}_{L_{s,q}(Q_{2R/3})}
\leq N\Big[ \norm{(D\omega)_{\text{dy}}^{\#}}_{L_{s,q}
(Q_{2R/3})} + R^{2/s+d/q}(|D\omega|)_{Q_{3R/4}} \Big]\\
& \leq  N \kappa^{-\frac{d+2}{q_0}} \norm{\mathcal{M}(I_{Q_{_{3R/4}}}|f|^{q_0})^{1/q_0}}_{L_{s,q}(\bR^{d+1})}  +  N(d, q_0) \kappa \norm{\mathcal{M}(I_{Q_{_{3R/4}}}|Dg|^{q_0})^{1/q_0}}_{L_{s,q}(\bR^{d+1})}\\
&\quad+N\Big(\kappa^{-\frac{d+2}{q_0}} \delta^{1/q_0-1/q_1}+ \kappa\Big) \norm{\mathcal{M}(I_{Q_{_{3R/4}}}|D^2u|^{q_1})^{1/q_1}}_{L_{s,q}(\bR^{d+1})} + NR^{2/s+d/q}\kappa^{-d-2}(|D\omega|)_{Q_{3R/4}}\\
&\le N \Big[ \kappa^{-\frac{d+2}{q_0}}\norm{f}_{L_{s,q}(Q_{3R/4})} + \kappa \norm{Dg}_{L_{s,q}(Q_{3R/4})} \\
& \quad +\Big(\kappa^{-\frac{d+2}{q_0}} \delta^{1/q_0-1/q_1} + \kappa\Big) \norm{D^2u}_{L_{s,q}(Q_{3R/4})}+  R^{2/s+d/q}\kappa^{-d-2}(|D\omega|)_{Q_{3R/4}}\Big],
\end{align*}
which gives \eqref{eq1.39n}.

Next we show \eqref{eq1.40n}. Since $\Div u =g$, as in the proof of Lemma \ref{lem1.2}, we have
\begin{equation}
                            \label{eq1.52n}
\|D^2u\|_{L_{s,q}(Q_{R/2})}
\le N\|D\omega\|_{L_{s,q}(Q_{2R/3})} + N\|Dg\|_{L_{s,q}(Q_{2R/3})}  +N R^{-1}\|Du\|_{L_{s,q}(Q_{2R/3})}.
\end{equation}
We also use H\"older's inequality and Lemma \ref{lem2.4c} with a covering argument to estimate the last term in \eqref{eq1.39n} by
\begin{align}
&R^{2/s+d/q}\kappa^{-d-2}(|D\omega|)_{Q_{3R/4}}
\le NR^{2/s+d/q}\kappa^{-d-2}(|D^2 u|^{q_0})^{1/q_0}_{Q_{3R/4}}\nonumber\\
&\le NR^{2/s+d/q}\kappa^{-d-2}\Big[(|f|^{q_0})_{Q_{R}}^{1/q_0} + (|Dg|^{q_0})_{Q_{R}}^{1/q_0}
+R^{-1}(|Du|^{q_0})_{Q_{R}}^{1/q_0}+\delta^{1/q_0 - 1/q_1} (|D^2 u|^{q_1})_{Q_{R}}^{1/q_1}\Big]\nonumber\\
                    \label{eq2.01n}
&\le N\kappa^{-d-2}\Big(\|f\|_{L_{s,q}(Q_{R})} + \|Dg\|_{L_{s,q}(Q_{R})}
+R^{-1}\| Du\|_{L_{s,q}(Q_{R})}+\delta^{1/q_0 - 1/q_1}\|D^2 u\|_{L_{s,q}(Q_{R})}\Big).
\end{align}
Combining \eqref{eq1.52n}, \eqref{eq1.39n}, and \eqref{eq2.01n}, we reach \eqref{eq1.40n}. The lemma is proved.
\end{proof}

Now we are ready to give

\begin{proof}[Proof of Theorem \ref{thm2.3b}]
As in the proof of Lemma \ref{thm2.3}, for $k=1,2,\ldots$, we denote $Q^k=(-(1-2^{-k})^2,0)\times B_{1-2^{-k}}$. Let $k_0$ be the smallest positive integer such that $2^{-k_0-1}\le R_0$. For $k\ge k_0$, we apply \eqref{eq1.40n} with $R=2^{-k-1}$ and a covering argument to get
\begin{align}
&\norm{D^2u}_{L_{s,q}(Q^{k})} \leq N \kappa^{-d-2}\norm{f}_{L_{s,q}(Q^{k+1})} + N \kappa^{-d-2}\norm{Dg}_{L_{s,q}(Q^{k+1})} \nonumber\\
                            \label{eq2.15n}
&\quad+N\Big(\kappa^{-d-2} \delta^{1/q_0-1/q_1} + \kappa\Big) \norm{D^2u}_{L_{s,q}(Q^{k+1})}+
N \kappa^{-d-2}2^{k}\norm{Du}_{L_{s,q}(Q^{k+1})}.
\end{align}
From \eqref{eq2.15n} and interpolation inequalities, we get
\begin{align}
&\norm{D^2u}_{L_{s,q}(Q^{k})} \leq N \kappa^{-d-2}\norm{f}_{L_{s,q}(Q^{k+1})} + N \kappa^{-d-2}\|Dg\|_{L_{s,q}(Q^{k+1})} \nonumber\\
                            \label{eq3.21}
&\quad+N\Big(\kappa^{-d-2} \delta^{1/q_0-1/q_1} + \kappa\Big) \norm{D^2u}_{L_{s,q}(Q^{k+1})}+
N \kappa^{-2d-5}2^{2k}\norm{u}_{L_{s,q}(Q^{k+1})}.
\end{align}
Note that the constants $N$ above are independent of $k$.
We then take $\kappa$ sufficiently small and then $\delta$ sufficiently small so that
$$
N\Big(\kappa^{-d-2} \delta^{1/q_0-1/q_1} + \kappa\Big)\le 1/5.
$$
Finally, we multiply both sides of \eqref{eq3.21} by $5^{-k}$ and sum in $k=k_0,k_0+1,\ldots$ to get the desired estimate. The theorem is proved.
\end{proof}

\section{Regularity for Navier-Stokes equations} \label{NS-se}

To prove Theorem \ref{NS-reg.thm}, let us recall several well-known results needed for the proof. The first result is the classical regularity criterion for Leray-Hopf weak solutions of the Navier-Stokes equations established in \cite{Serrin}.
\begin{theorem} \label{Serrin-thm}
For each $\rho>0$, let $u$ be a Leray-Hopf weak solution of the Navier-Stokes equations \eqref{NS.eqn} in $Q_\rho$ which satisfies
\[\sup_{t \in (-\rho^2, 0)} \int_{B_\rho}|u(t,x)|^2 \,dx + \int_{Q_\rho} |\nabla u(t,x)|^2 \,dx\,dt < \infty,\]
and $\norm{u}_{L_{s,q}(Q_\rho)} <\infty$ with some $s, q \in (1, \infty)$ such that
\[
d/q + 2/s < 1.
\]
Then, $u$ is smooth in $Q_\rho$.
\end{theorem}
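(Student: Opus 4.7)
The plan is a linear-in-$u$ bootstrap: I treat the nonlinear convective term as a divergence-form forcing and iteratively apply an $L_{s,q}$-estimate for the linear Stokes system (the special case of Theorem \ref{thm2.3} with $a_{ij}=\delta_{ij}$, $d_{ij}=0$, where Assumption \ref{assump1} is trivially satisfied for any $\alpha_0$) together with parabolic Sobolev embedding to successively improve the integrability of $u$. Since $s,q \in (1,\infty)$ with $d/q + 2/s < 1$, one automatically has $q > d$ and $s > 2$, so $s/2, q/2 \in (1,\infty)$ and the linear estimate is applicable at the first step.

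Step 1 (reformulation). Because $\Div u = 0$, the nonlinearity can be rewritten as $(u\cdot\nabla)u = \Div(u\otimes u)$, so $u$ solves the divergence-form Stokes system $u_t - \Delta u + \nabla p = \Div f$, $\Div u = 0$, with $f = -u\otimes u$. Set $\sigma := 1 - d/q - 2/s > 0$, and choose a strictly decreasing sequence of radii $\rho = \rho_0 > \rho_1 > \cdots > \rho_\infty = \rho/2$.

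Step 2 (iterative gain). Put $(s_0,q_0) = (s,q)$. Inductively suppose $u \in L_{s_k,q_k}(Q_{\rho_k})$ with $\sigma_k := 1 - d/q_k - 2/s_k \ge \sigma > 0$. Then $f = -u\otimes u \in L_{s_k/2, q_k/2}(Q_{\rho_k})$, and Theorem \ref{thm2.3} (with constant coefficients) on $Q_{\rho_k}$ yields
\begin{equation*}
\|Du\|_{L_{s_k/2, q_k/2}(Q_{\rho_{k+1}})}
\le N\bigl(\|u\otimes u\|_{L_{s_k/2,q_k/2}(Q_{\rho_k})} + \|u\|_{L_{s_k/2, q_k/2}(Q_{\rho_k})}\bigr).
\end{equation*}
Combining this gradient gain with the energy bound $u \in L_\infty((-\rho^2,0);L^2(B_\rho))\cap L_2((-\rho^2,0);H^1(B_\rho))$ and parabolic Sobolev/interpolation embedding produces a new pair $(s_{k+1},q_{k+1})$ with $\sigma_{k+1} \ge \sigma_k + \alpha$ for a fixed $\alpha = \alpha(d) > 0$ (the embedding trades a derivative for an improved space-time integrability exponent, and the constant is uniform in $k$). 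After finitely many steps $\sigma_k$ exceeds any prescribed threshold and one reaches $u \in L_\infty(Q_{2\rho/3})$.

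Step 3 (full smoothness). Once $u$ is locally bounded, $u\otimes u \in L_\infty$, so the standard $L^p$/Schauder theory for the linear Stokes system (applied to cutoff versions of $u$, with the same pressure-free formulation as in Step 2) upgrades $u$ to $C^{\alpha/2,\alpha}$, then $C^{1+\alpha/2,2+\alpha}$, and iteratively to $C^\infty$ in $Q_{\rho/2}$.

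Main obstacle. The delicate point at each bootstrap step is the pressure: $p$ is nonlocal in $u$, so cutting off $u$ introduces $\nabla p$ terms that do not localize cleanly. The device that makes this work, and which is central to the present paper, is to bypass $p$ by passing to the curl $\omega = \nabla\times u$: mollifying in $x$ and taking the curl kills $\nabla p$ and produces a pure parabolic system for $\omega$, on which $L_{s,q}$-estimates are routine, and $Du$ is recovered from $\omega$ via $\Div u = 0$ using the identity in the proof of Lemma \ref{lem1.2}. This is precisely why Theorem \ref{thm2.3} has no pressure term on the right-hand side, and why the bootstrap closes. Ensuring that the constants in the iteration do not blow up as $\rho_k \downarrow \rho/2$, and that the exponent gain $\alpha$ is truly uniform in $k$, is the remaining technical burden.
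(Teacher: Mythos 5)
First, a point of context: the paper does not prove Theorem \ref{Serrin-thm} at all. It is quoted as ``the classical regularity criterion \dots\ established in \cite{Serrin}'' and used as a black box in the proof of Theorem \ref{NS-reg.thm}. So there is no ``paper's proof'' to compare against; I can only evaluate your argument on its own terms.

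Your Step 2 has a genuine gap: the claimed iteration gain $\sigma_{k+1}\ge \sigma_k+\alpha$ is false for the bookkeeping you set up. Writing $f=-u\otimes u\in L_{s_k/2,\,q_k/2}$ and applying the Stokes estimate gives $Du\in L_{s_k/2,\,q_k/2}$, and Lemma \ref{Sob-imbdding} with $m=\min(s_k,q_k)/2$ then gives $u\in L_{m(d+2)/d}$. Taking $s_k=q_k$ for concreteness, the new exponent is $s_k(d+2)/(2d)$, which is \emph{smaller} than $s_k$ whenever $d\ge 3$, so $\sigma$ decreases. Interpolating one factor of $u$ against the energy class does not rescue this: with $u\in L_{r,p}$, $2/r+d/p=d/2$, and $u\in L_{s,q}$, $2/s+d/q=1-\sigma$, one finds that the resulting product exponent leads to $\sigma_{\text{new}}>\sigma$ only when
$$
(1-\sigma)(2-d)>\theta\,d\bigl[d-2(1-\sigma)\bigr],
$$
whose left-hand side is $\le 0$ and right-hand side is $\ge 0$ for $d\ge 2$ and every interpolation parameter $\theta\in[0,1]$. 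So the iteration, as written, loses ground rather than gaining it; the ``$\alpha=\alpha(d)>0$'' gain does not exist. The same computation shows the non-divergence variant ($u\cdot\nabla u\in L_{a,b}$ with $\nabla u$ in $L_2$) fails in the same way. The root cause is that you are treating the nonlinearity as purely quadratic in $u$, which halves the exponents, while the embedding only multiplies by $(d+2)/d<2$.

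To make a Serrin-type bootstrap close, one must keep one copy of $u$ pinned at the hypothesis level $L_{s,q}$ throughout the iteration and iterate on the \emph{derivative}, not on $u$ itself. Schematically, with $u\in L_{s,q}$ fixed, $2/s+d/q=1-\sigma$, and $\nabla u$ at parabolic ``level'' $\lambda_k:=2/r_k+d/p_k$, the forcing $u\cdot\nabla u$ sits at level $(1-\sigma)+\lambda_k$, and the Stokes/parabolic estimate plus one-derivative embedding returns $\nabla u$ at level $(1-\sigma)+\lambda_k-1=\lambda_k-\sigma$. Here the gain $\sigma>0$ per step is genuine, because the hypothesis factor $u\in L_{s,q}$ is not consumed. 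Carrying this out rigorously is delicate in your framework because Lemma \ref{Sob-imbdding} embeds $u$, not $\nabla u$ (the required $\nabla u\in L_\infty L^2$ is unavailable), so one needs the full anisotropic $W^{2,1}_{s,q}$ embedding, which in turn requires controlling $u_t$ (and hence $\nabla p$) --- precisely the localization difficulty you flag at the end. Serrin's own argument avoids all of this by treating $u\cdot\nabla$ as a subcritical drift in the vorticity (or velocity) equation and invoking De Giorgi--Moser parabolic theory directly, rather than iterating a forcing-term estimate. Note also that you cannot deduce this theorem from the paper's Theorem \ref{NS-reg.thm} or Corollary \ref{coro} without circularity, since Theorem \ref{Serrin-thm} is used to terminate the bootstrap in the proof of Theorem \ref{NS-reg.thm}.

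Your Step 1 and Step 3 are fine, and your closing remark about the role of the vorticity in bypassing the pressure correctly identifies the structural idea underlying the paper's linear estimates.
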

The following classical parabolic Sobolev embedding theorem will be used iteratively in the proof.
\begin{lemma} \label{Sob-imbdding} For each $m >1$, let $q = \frac{m(d+2)}{d}$. Then, for each $\rho>0$, there exists a constant $N = N(d, m, \rho)>0$ such that
\[
\norm{f}_{L_{q}(Q_\rho)} \leq N \sup_{t\in (-\rho^2, 0)} \norm{f(t, \cdot)}_{L_2(B_\rho)} + N\norm{\nabla f}_{L_m(Q_\rho)}.
\]
\end{lemma}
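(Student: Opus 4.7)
The plan is to combine the Gagliardo--Nirenberg interpolation inequality in the spatial variable with integration in time, and then apply Young's inequality to turn the resulting multiplicative estimate into the stated additive form.

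First, at each fixed $t \in (-\rho^2, 0)$, I would apply the Gagliardo--Nirenberg interpolation inequality on the ball $B_\rho$ with the standard $L_2$ lower-order correction:
\[
\|f(t,\cdot)\|_{L_q(B_\rho)} \le N \|\nabla f(t,\cdot)\|_{L_m(B_\rho)}^{\theta}\, \|f(t,\cdot)\|_{L_2(B_\rho)}^{1-\theta} + N \|f(t,\cdot)\|_{L_2(B_\rho)},
\]
where $N = N(d, m, \rho)$ and the interpolation parameter $\theta$ is forced by the scaling relation
\[
\frac{1}{q} = \theta\Big(\frac{1}{m} - \frac{1}{d}\Big) + (1-\theta)\frac{1}{2}.
\]
The specific exponent $q = m(d+2)/d$ in the statement corresponds precisely to $\theta = m/q = d/(d+2) \in (0,1)$. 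This is the unique choice that will make $q\theta = m$ in the next step and that lies in the admissible range of Gagliardo--Nirenberg for every $m > 1$.

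Second, I would raise both sides to the $q$-th power and integrate in $t \in (-\rho^2, 0)$. Using $q\theta = m$ and pulling the factor $\sup_t \|f(t,\cdot)\|_{L_2(B_\rho)}^{q-m}$ out of the time integral in the interpolation term, and bounding the remaining term trivially, I obtain
\[
\|f\|_{L_q(Q_\rho)}^q \le N \Big(\sup_t \|f(t,\cdot)\|_{L_2(B_\rho)}\Big)^{q-m} \|\nabla f\|_{L_m(Q_\rho)}^m + N \Big(\sup_t \|f(t,\cdot)\|_{L_2(B_\rho)}\Big)^q.
\]

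Third, I would take the $q$-th root and apply Young's inequality with conjugate exponents $q/(q-m)$ and $q/m$ to the product on the right-hand side. This converts the multiplicative bound into a sum of $\sup_t \|f(t,\cdot)\|_{L_2(B_\rho)}$ and $\|\nabla f\|_{L_m(Q_\rho)}$, yielding exactly the inequality in the statement, with the constant $N$ depending only on $d$, $m$, and $\rho$.

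I do not anticipate any real obstacle. The one point worth small care is the case $m \ge d$, where the classical Sobolev exponent $dm/(d-m)$ is not meaningful; however, the Gagliardo--Nirenberg inequality on the bounded domain $B_\rho$, derived for instance via a bounded Sobolev extension to $\mathbb{R}^d$ followed by the full-space interpolation, still holds with $\theta = d/(d+2) \in (0,1)$ in this range, so the argument is uniform across all $m > 1$.
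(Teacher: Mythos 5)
The paper states this lemma without proof, labeling it a ``classical parabolic Sobolev embedding theorem,'' so your task was to supply a derivation rather than match one. Your argument is correct and is essentially the standard one: a spatial Gagliardo--Nirenberg interpolation between $L_2(B_\rho)$ and $\dot W^{1}_m(B_\rho)$ with exponent $\theta = d/(d+2)$, followed by integration in time (noting that $q\theta = m$, so the gradient term integrates to exactly $\|\nabla f\|_{L_m(Q_\rho)}^m$ while the $L_2$ factor pulls out as a supremum), and then Young's inequality with conjugate exponents $q/(q-m)$ and $q/m$ to pass from the multiplicative to the additive bound. The arithmetic checks out: with $q = m(d+2)/d$ one has $1/q = d/(m(d+2))$, which matches $\theta(1/m - 1/d) + (1-\theta)/2$ at $\theta = d/(d+2) \in (0,1)$, and since $\theta<1$ the Gagliardo--Nirenberg exceptional case at $m=d$ is avoided. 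Your remark about $m\ge d$ is also well-placed: the bounded-domain version of the interpolation inequality (via extension) handles that range uniformly, and $q-m = 2m/d > 0$ ensures Young's inequality applies. No gaps.
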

Now, we are ready to prove Theorem \ref{NS-reg.thm}

\begin{proof}[Proof of Theorem \ref{NS-reg.thm}]
For the reader's convenience, we recall that $(d_{ij})_{d \times d}$ is the skew-symmetric matrix which satisfies the equation
\begin{equation} \label{d-matrix.eqn}
\left \{
\begin{array}{cccl}
\Delta d_{ij} & = & D_j u_i - D_i u_j & \quad B_1, \\
d_{ij} & = &0 & \quad \partial B_1.
\end{array} \right.
\end{equation}
Then, by the energy estimate for the equation \eqref{d-matrix.eqn}, we see that
\begin{equation} \label{L-p-d.est}
\sup_{t\in (-1,0)}  \int_{B_{1}} |D d_{ij}(t,x)|^2 \,dx  \leq 4 \sup_{t\in (-1,0)}  \int_{B_{1}} |u (t,x)|^{2} \,dx < \infty, \quad \forall \ i, j = 1,2,\ldots, d.
\end{equation}
Let us now denote $h = (h_1, h_2,\ldots, h_d)$ by
\[
 h_j (t,x) = -u_j (t,x) - \sum_{i=1}^d D_i d_{ij}(t,x) , \quad (t,x) \in Q_1, \ j = 1, 2,\ldots, d.
\]
Observe that $\Div h (t, \cdot)  =0$ in the sense of distributions in $B_1$ for a.e. $t \in (-1,0)$.  Then, we can write the nonlinear term in \eqref{NS.eqn} as
\[
u \cdot \nabla u_k = \sum_{j=1}^d u_j D_j u_k  =
-\sum_{i=1}^d D_i d_{ij} D_j u_k -\sum_{j=1}^d D_j [h_j u_k ].
\]
As the matrix $(d_{ij})_{d\times d}$ is skew-symmetric, we see that
\[
\sum_{i, j =1}^d \int D_i d_{ij} D_j u_k \varphi \,dx = - \int  d_{ij} D_j u_k D_i \varphi \, dx ,\quad \forall \varphi \in C_0^\infty(B_1).
\]
Consequently, $u$ is also a weak solution of the Stokes system
\begin{equation} \label{NS-proof.eqn}
u_t - D_i[(I_d + d_{ij})D_j u] + \nabla p =  \Div f \quad \text{in} \quad Q_1,
\end{equation}
where $I_d$ is the $d\times d$ identity matrix, and $f_{jk} = h_j u_k$.

Next, for each $k  \in \mathbb{N}$, we define the following sequences
\[
s_0 =2, \quad s_{k+1} = s_k \frac{d+2}{d}, \quad r_{k} = \frac 1 2+\frac 1 {2^{k+1}}.
\]
Let $k_0 \in \mathbb{N}$ be sufficiently large such that
\begin{equation} \label{choice-k-0}
\frac{d}{s_{k_0}} + \frac{2}{s_{k_0}} < 1,
\end{equation}
and let
\begin{equation} \label{choice-ep}
\epsilon = \min\Big \{ \delta(d, 1, s_{k}, s_{k},\alpha_0), \ k=1,2,\ldots,k_0\Big \},
\end{equation}
where $\delta(d, 1, s_{k}, s_{k},\alpha_0)$ is defined in Theorem \ref{thm2.3}.
Assume that \eqref{epsilon-criteria} holds and we will prove Theorem \ref{NS-reg.thm} with this choice of $\epsilon$. To this end, we first observe that as $(s_k)_{k \in \mathbb{N}}$ is an increasing sequence
\begin{equation} \label{check-alpha}
\alpha_0 > \frac{2(d+2)}{d+4} = \frac{s_1}{s_1-1} \geq \frac{s_k}{s_k -1}, \quad \forall \ k \in \mathbb{N}.
\end{equation}
Now, from \eqref{d-matrix.eqn}, we see that $h_j(t, \cdot)$ is a harmonic function for a.e. $t \in (-1,0)$, i.e.,
\[
\Delta h_j (t, \cdot) = 0 \quad \text{in} \quad B_1, \quad \forall \ j = 1, 2,\ldots, d.
\]
Therefore, it follows this and the estimate \eqref{L-p-d.est} that for any $\rho \in (0, 1)$, we have
\begin{align}  \nonumber
%\begin{split}
\norm{h}_{L_\infty(Q_{\rho})} & = \sup_{t\in (-1,0)}\norm{h(t, \cdot)}_{L_\infty(B_{\rho})} \leq N(d, \rho) \sup_{t\in (-1,0)} \left ( \fint_{B_{1}} |h(t, x)|^2 \,dx \right)^{1/2}  \\ \nonumber
& \leq N(d, \rho) \sup_{t\in (-1,0)}  \left[\left ( \fint_{B_{1}} |u(t, x)|^2 \,dx \right)^{1/2} + \sum_{i,j=1}^d\left ( \fint_{B_{1}} |Dd_{ij}(t, x)|^2 \,dx \right)^{1/2} \right] \\ \label{h-L-infty.est}
& \leq C(d, \rho) \sup_{t \in (-1, 0)} \left ( \fint_{B_{1}} |u(t, x)|^2 \,dx \right)^{1/2} < \infty.
%\end{split}
\end{align}

Let us also denote
\[
\norm{u}_{V_{s_k}(Q_{r_k})} = \sup_{t \in (-r_k^2, 0)}\left( \int_{B_{r_k}} |u(t,x)|^2\, dx \right)^{1/2} + \left(\int_{Q_{r_k}} |\nabla u(t,x)|^{s_k}\, dx\,dt \right)^{1/s_k}.
\]
Observe that as $\norm{u}_{V_{s_0}(Q_{r_0})} < \infty$, it follows from Lemma \ref{Sob-imbdding}, $u \in L_{s_1}(Q_{r_0})$. From this, \eqref{choice-ep}, \eqref{check-alpha}, and \eqref{h-L-infty.est},  we can apply Theorem \ref{thm2.3} to the equation \eqref{NS-proof.eqn} to obtain
\[
\norm{\nabla u}_{L_{s_1}(Q_{r_1})} \leq N_1 \Big[ \norm{u}_{L_{s_1}(Q_{r_0})} + \norm{f}_{L_{s_1}(Q_{r_0})} \Big] \leq N \norm{u}_{L_{s_1}(Q_{r_0})} < \infty.
\]
Consequently, we see that $\norm{u}_{V_{s_1}(Q_{r_1})} <\infty$, therefore it follows from Lemma \ref{Sob-imbdding} again that  $u \in L_{s_2}(Q_{r_1}) < \infty$. Hence, by applying Theorem \ref{NS-reg.thm} again to \eqref{NS-proof.eqn} we infer that
\[
\norm{\nabla u}_{L_{s_2}(Q_{r_2})} \leq N_2 \norm{u}_{L_{s_2}(Q_{r_1})} < \infty.
\]
Repeating this procedure, we then conclude that
\[
\norm{\nabla u}_{L_{s_{k+1}}(Q_{r_{k+1}})} \leq N_{k+1} \norm{u}_{L_{s_{k+1}}(Q_{r_k})} < \infty, \quad \forall \ k \in \{0,1, 2, \ldots, k_0-1\}.
\]
From this estimate, Theorem \ref{Serrin-thm}, and  the choice of $k_0$ in \eqref{choice-k-0}, we see that the conclusion of Theorem \ref{NS-reg.thm} follows. The proof is then complete.
\end{proof}
Finally, we conclude our paper with the proof of Corollary \ref{coro}.

\begin{proof}[Proof of Corollary \ref{coro}] Note that when $s<\infty$, the boundedness of $L_s((-1,0);L_{q}^w(B_1))$ norm of $u$ implies the smallness of the same norm of $u$ in small cylinders. Therefore, $(i)$ follows from $(ii)$. Moreover, by the well-known embedding:
$$
L_q^w \hookrightarrow \mathscr{M}_{q_1,\lambda},\quad \text{when}\quad q_1\in [1,q)\,\,\text{and}\,\, \lambda=d(1-q_1/q),
$$
it suffices for us to prove $(iii)$.

By the Calder\'on-Zygmund estimate in Morrey spaces for the Laplace equation (see, for instance, \cite{Ra00}), we have
$$
\|Dd_{ij}(t, \cdot) \|_{\mathscr{M}_{q,\beta}(B_1)}\le N(d,q,\beta)\|u(t, \cdot)\|_{\mathscr{M}_{q,\beta}(B_1)} \quad \text{for a.e.} \ t \in (-1,0),
$$
which implies that
\begin{equation}
                                        \label{eq9.00}
\|Dd_{ij}\|_{\mathscr{M}_{s,\alpha}((-1,0); \mathscr{M}_{q,\beta}(B_1))}\le N(d,s,q,\alpha,\beta)\|u\|_{\mathscr{M}_{s,\alpha}((-1,0);
\mathscr{M}_{q,\beta}(B_1))}.
\end{equation}
%It follows from H\"older's inequality and \eqref{eq9.00} that for any $s_1\in (1,s)$, $z_0\in Q_{2/3}$, and $\rho\in (0,1/3)$,
%\begin{align} \nonumber
%&\|Dd_{ij}\|_{L_{s_1,q}(Q_{\rho}(z_0))}\\ \nonumber
%&\le N(d,s_1,s)\rho^{2(1/s_1-1/s)}
%\|Dd_{ij}\|_{L_{s}^w((t_0-\rho^2,t_0);L_q(B_{\rho}(x_0)))}\\ \nonumber
%&\le N(d,s_1,s)\rho^{2(1/s_1-1/s)}
%\|Dd_{ij}\|_{L_{s}^w((-1,0);L_q(B_{1}))}\\ \label{NS-reg-1.est}
%&\le N(d,q,s_1,s)\rho^{2(1/s_1-1/s)}
%\|u\|_{L_{s}^w((-1,0);L_q(B_{1}))}.
%\end{align}
On the other hand, for $\alpha_0 \in (\frac{2(d+2)}{d+4}, \min\{s, dq/(d-q)_+\})$ with $(d-q)_+ = \min\{0, d-q\}$, by using \eqref{eq5.13}, the Sobolev-Poincar\'{e} inequality, and H\"{older}'s inequality, we see that for any $z_0\in Q_{2/3}$ and $\rho\in (0,1/3)$,
\begin{align} \nonumber
& \fint_{Q_\rho(z_0)} |d_{ij}(t,x) - [d_{ij}]_{B_\rho(x_0)}(t)|^{\alpha_0}\, dx\, dt  \\ \nonumber
%& \leq N(d,\alpha_0) \rho^{\alpha_0} \fint_{Q_\rho(z_0)} |D d_{ij}(t,x)|^{\alpha_0}\, dx\, dt \\ \nonumber
& \leq  N(d,\alpha_0) \rho^{\alpha_0} \left [\fint_{t_0 -\rho^2}^{t_0} \left( \fint_{B_\rho(x_0)} |D d_{ij}(t,x)|^{q}\, dx \right)^{s/q}\, dt \right]^{\frac{\alpha_0}{s}} \\ \label{NS-reg-2.est}
& = N(d,\alpha_0) \|Dd_{ij}\|^{\alpha_0}_{\mathscr{M}_{s,\alpha}((-1,0);
\mathscr{M}_{q,\beta}(B_1))}.
\end{align}
By combining \eqref{eq9.00} and \eqref{NS-reg-2.est}, we can apply Theorem \ref{NS-reg.thm} to conclude the proof.
\end{proof}

\section*{Acknowledgement}

The authors would like to thank Doyoon Kim for helpful discussions on the subject. 

%------------------------------------------------------------------------------%

\end{document}